\numberwithin{equation}{section}
\newtheorem{theorem}{Theorem}[section]
\newtheorem{lemma}[theorem]{Lemma}
\newtheorem{proposition}[theorem]{Proposition}
\newtheorem{remark}[theorem]{Remark}
\newcommand{\E}{\mathbb{E}}
\newcommand{\Pbb}{\mathbb{P}}
\newcommand{\R}{\mathbb{R}}
\newcommand{\Z}{\mathbb{Z}}
\newcommand{\Scal}{\mathcal{S}}
\newcommand{\Ycal}{\mathcal{Y}}
\title[Moderate deviation principles]{Moderate deviation principles for the current and the tagged particle in the WASEP}
\keywords{Moderate deviation; tagged particle; current; exclusion process.}
\author{Xiaofeng Xue}
\address{School of Mathematics and Statistics, Beijing Jiaotong University, Beijing 100044, China.}
\email{xfxue@bjtu.edu.cn}
\author{Linjie Zhao}
\address{School of Mathematics and Statistics \& Hubei Key Laboratory of Engineering Modeling and Scientific Computing, Huazhong University of Science and Technology, Wuhan 430074, China.}
\email{linjie\_zhao@hust.edu.cn}
\thanks{\textbf{Acknowledgments.}  The project is supported by the National Natural Science Foundation of China with grant numbers 12401168 and 12371142, and the Fundamental Research Funds for the Central Universities in China.}
\begin{document}
	
	\begin{abstract}
		We study the weakly asymmetric simple exclusion process in one dimension. We prove  sample path moderate deviation principles for the current and the tagged particle  when the process starts from one of its stationary measures. We simplify the proof in our previous works \cite{xue2023moderate,xue2024sample}, where the same problem was investigated in the symmetric simple exclusion process. 
	\end{abstract}

\maketitle

\section{Introduction}

The exclusion process is one of the most studied interacting particle systems \cite{liggettips,liggett99}. In the dynamics, particles perform random walks on some graph subjected to the exclusion rule, that is, there is at most one particle at each site and jumping to occupied sites is suppressed.  It has been a long standing problem to study the behavior of a typical particle, usually called the tagged particle in the literature. The main challenge is that the tagged particle itself is not Markovian. By studying the environment process of the tagged particle, which is Markovian, much progress has been made, such as  law of large numbers \cite{saada1987tagged,rezakhanlou1994evolution}, central limit  theorems in equilibrium \cite{kipnis1986central,varadhan1995self,Kipnis86,sethuraman2000diffusive,sethuraman2006diffusive,zhao2023motion} and in nonequilibrium \cite{jara2008quenched,jara2006nonequilibrium,jara2009nonequilibrium}. Recently, large deviations \cite{sethuraman2013large,sethuraman2023atypical} and moderate deviations \cite{xue2023moderate,xue2024sample} for the current and the tagged particle in one dimension were also investigated. We refer the readers to \cite{komorowski2012fluctuations} and the above references for an excellent review on the related literature.

The aim of this paper is to simplify the proof of our previous works on sample path moderate deviation principles for the current and the tagged particle in the symmetric simple exclusion process in one dimension \cite{xue2023moderate,xue2024sample}.  To make our results more general, we study the weakly asymmetric simple exclusion process in one dimension. We show that when the process starts from one of its stationary measures, the current and the tagged particle process satisfy the moderate deviation principles under the correct time scaling.

The idea of the proof is as follows. Since particles cannot take over each other in the model, one can relate the current and the tagged particle to the empirical measure of the process. In \cite{zhao2024moderate}, the second author proved moderate deviation principles for the empirical measure of the process. Then, by contraction principle, the current and the tagged particle also satisfy moderate deviation principles. However, the rate functions for the current and the tagged particle are expressed as a  variational formula through this approach. By long calculations, the same authors solved it by using the Fourier approach in \cite{xue2023moderate,xue2024sample}. In this paper, we present a different approach, mainly relying on the contraction principle, and simplifies the previous proof significantly. The main observation is that the rate functions for the density fluctuation fields and for the limit of the density fluctuation fields are the same, which allows us to use the contraction principle back and forth. It seems that the approach developed here can also simplify the proof in \cite{zhao2024moderate}, where sample path moderate deviation principles for the occupation time of the exclusion process were proved in one dimension. 

The paper is organized as follows. In Section \ref{sec: model}, we introduce the process and the main results. We state moderate deviation principles for the empirical measure of the process in Section \ref{sec: density flieds}. Finally, the proof for moderate deviation principles of the current and the tagged particle is presented in Section \ref{sec: proof}.

\section{Model and Results}\label{sec: model}

We study the weakly asymmetric simple exclusion process (WASEP) on the one-dimensional integer lattice $\Z$. The state space of the process is $\Omega:= \{0,1\}^\Z$. For a configuration $\eta \in \Omega$, $\eta_x = 1$ if and only if there is one particle at site $x \in \Z$. Let $\alpha ,\beta,\gamma\geq 0$ be three parameters. Throughout the article, we shall take
\[\gamma = \min \{1+\beta, 2\}.\]
The infinitesimal generator of the WASEP is given by 
\[L_n = n^{\gamma} (L_s + \alpha n^{-\beta}L_a).\] 
Above, $L_s$ is associated to the dynamics of the symmetric simple exclusion process (SSEP), which acts on local functions $f: \Omega \rightarrow \R$ as
\[L_s f (\eta) = \frac{1}{2} \sum_{x \in \Z} \big[ f(\eta^{x,x+1} )- f(\eta)\big],\]
and $L_a$ is associated to the dynamics of the totally asymmetric simple exclusion process (TASEP),
\[L_a f (\eta) = \sum_{x \in \Z} \eta_x (1-\eta_{x+1})\big[ f(\eta^{x,x+1}) - f(\eta)\big],\]
where $\eta^{x,y}$ is the configuration obtained from $\eta$ by swapping the values of $\eta_x$ and $\eta_y$,
\[(\eta^{x,y})_{z} = \begin{cases}
 \eta_x, \quad \text{if } z = y,\\
  \eta_y, \quad \text{if } z = x,\\
   \eta_z, \quad \text{if } z \neq x, y.\\
\end{cases}\]
Note that if $\alpha = 0$ and $\gamma = 2$, then we obtain the  SSEP  in one dimension under the diffusive scaling; if $\beta = 0$ and $\gamma=1$, the we get the asymmetric simple exclusion process (ASEP) in one dimension under the hyperbolic scaling.

For $\rho \in [0,1]$, let $\nu_{\rho}$ be the product measure on the configuration space $\Omega$ with particle density $\rho$,
\[\nu_\rho (\eta_x = 1) = \rho, \quad  x \in \Z.\]
It is well known that the measure $\nu_\rho$ is reversible for the generator $L_s$ and is invariant for $L_a$, see \cite{liggettips} for example.

For any probability measure $\mu$ on $\Omega$, denote by $\Pbb_{\mu} \equiv \Pbb_{\mu}^n$ the probability measure on the path space $D(\R_+,\Omega)$ endowed with the Skorokhod topology corresponding to the law of the process $\eta(t)$ with generator $L_n$ and with initial distribution $\mu$. Let $\E_\mu \equiv \E_\mu^n$ be the corresponding expectation. 

We are interested in the current of the WASEP when it starts from its invariant measure $\nu_\rho$. For $x \in \Z$, the current $J^n_{x,x+1} (t)$ is defined as the number of particles jumping from $x$ to $x+1$ up to time $t$ minus the number of particles jumping from $x+1$ to $x$ up to time $t$. 

We shall also investigate the long-time behavior of the tagged particle. To define it, let $\nu_\rho^*$ be the measure $\nu_\rho$ conditioned on having a particle at the origin,
\[\nu_\rho^* (\cdot) = \nu_\rho (\cdot | \eta_0 = 1).\]
We call the particle initially at the origin the tagged particle. Under $\nu_\rho^*$, let $X^n (t)$ be the position of the tagged particle at time $t$. It is also well known that the measure $\nu_\rho^*$  is invariant for the environment process, defined as $\eta_{x + X^n (t)}(t)$ for $x \in \Z$,  as seen from the tagged particle, see \cite{liggettips} for example.

\subsection{Invariance principles} In this subsection, we state invariance principles for the current and the tagged particle.  In the rest of this article, we fix a time horizon $T > 0$. For any $t,s>0$, define the variance function as
\begin{equation}\label{a t s}
    a (t,s) = \begin{cases}
        \chi(\rho)\alpha|1-2\rho|\min\{t,s\} \quad &\text{if }\; \beta < 1,\\
        \chi(\rho)\left(f(t)+f(s)-f(|t-s|)\right) \quad &\text{if }\; \beta = 1,\\
        \chi(\rho)(\sqrt{t}+\sqrt{s}-\sqrt{|t-s|}) / \sqrt{2\pi}  \quad &\text{if }\; \beta > 1,
    \end{cases}
\end{equation}
 where $\chi (\rho) = \rho (1-\rho)$, and 
\[
f(t)=\frac{\alpha|1-2\rho|t}{2}+\mathbb{E}\left[\left(B_t-\alpha|1-2\rho|t\right)_+\right].
\]
Here, $\{B_t\}_{t\geq 0}$ is the standard one dimensional Brownian motion starting from the origin. For $r \in \R$, $r_+ := \max \{r,0\}$.  Note that $a (t,s) = 0$ when $\rho = 1/2$ and $\beta < 1$. Define
\[\bar{J}^n_{-1,0} (t) := J^n_{-1,0} (t) - t \alpha n^{\gamma-\beta}\chi (\rho), \quad \bar{X}^n (t) = X^n (t) - t \alpha n^{\gamma-\beta} (1-\rho).\]

By following \cite{gonccalves2008central} line by line, where fluctuations for the current and the tagged particle in the ASEP were investigated, one can prove the following result directly. The main idea is to relate the current and the tagged particle to the density fluctuation field of the process. For this reason, we omit the proof here.

\begin{proposition}\label{prop clt current tagged particle} Assume that $\rho \neq 1/2$ or $\beta \geq 1$. Then, as $n \rightarrow \infty$,  the sequence of processes $\{\bar{J}^n_{-1,0} (t)/\sqrt{n}, 0 \leq t \leq T \}_{n \geq 1}$ under $\mathbb{P}_{\nu_\rho}$ (respectively $\{\bar{X}^n (t)/\sqrt{n}, 0 \leq t \leq T \}_{n \geq 1}$ under $\mathbb{P}_{\nu_\rho^*}$) converges in the space $D([0,T],\R)$ to a Gaussian process with covariance $a(t,s)$ (respectively $a(t,s)/\rho^2$).
\end{proposition}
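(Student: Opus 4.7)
The plan is to follow the strategy of \cite{gonccalves2008central}: express both $\bar{J}^n_{-1,0}(t)$ and $\bar{X}^n(t)$ in terms of the density fluctuation field
$$Y^n_t(H) := \frac{1}{\sqrt{n}} \sum_{x \in \Z} H(x/n)\bigl(\eta_x(t) - \rho\bigr)$$
tested against suitable functions $H$, and then transfer the central limit theorem from $Y^n_t$. First I would establish, under $\Pbb_{\nu_\rho}$, that $(Y^n_t)_{0 \leq t \leq T}$ converges in $D([0,T], \Scal'(\R))$ to a (generalised) Ornstein--Uhlenbeck process. This goes via Dynkin's martingale decomposition, a variance bound coming from the symmetric part of the carré du champ, and the Boltzmann--Gibbs principle to replace the nonlinear current term $\eta_x(1-\eta_{x+1}) - \chi(\rho)$ by a linear functional of $\eta$. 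The drift and noise of the limit depend on $\beta$: pure transport at velocity $\alpha(1-2\rho)$ for $\beta < 1$, transport plus Laplacian for $\beta = 1$, and the pure Laplacian for $\beta > 1$.

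For the current, the conservation identity
$$J^n_{-1,0}(t) = \sum_{x \geq 0}\bigl(\eta_x(t) - \eta_x(0)\bigr)$$
formally expresses $\bar{J}^n_{-1,0}(t)/\sqrt{n}$ as $Y^n_t(\mathbf{1}_{[0,\infty)}) - Y^n_0(\mathbf{1}_{[0,\infty)})$, minus the explicit mean drift $t \alpha n^{\gamma-\beta}\chi(\rho)/\sqrt{n}$. Since $\mathbf{1}_{[0,\infty)}$ is not Schwartz, I would approximate it by compactly supported smooth cut-offs $H_K$ and show that the tail
$$\frac{1}{\sqrt{n}}\sum_{x \geq 0}\bigl[1 - H_K(x/n)\bigr]\bigl(\eta_x(t) - \eta_x(0)\bigr)$$
is negligible in $L^2(\Pbb_{\nu_\rho})$ uniformly in $t \in [0,T]$, as first $n \to \infty$ and then $K \to \infty$, by stationarity together with a second moment calculation. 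The covariance of the limit is then obtained by evaluating the covariance kernel of the limiting OU process on $\mathbf{1}_{[0,\infty)}$; a direct calculation reproduces $a(t,s)$ in each of the three regimes for $\beta$.

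For the tagged particle, the order-preserving identity
$$\{X^n(t) \geq k\} = \Big\{\sum_{x \geq k} \eta_x(t) \geq 1 + \sum_{x \geq 1} \eta_x(0)\Big\}, \qquad k \geq 1,$$
(with the analogous version for $k \leq 0$) linearises around the deterministic velocity $\alpha n^{\gamma-\beta}(1-\rho)$ and yields
$$\frac{\bar{X}^n(t)}{\sqrt{n}} = \frac{1}{\rho}\cdot \frac{\bar{J}^n_{-1,0}(t)}{\sqrt{n}} + o_{\Pbb}(1),$$
from which the CLT with covariance $a(t,s)/\rho^2$ follows. Tightness in $D([0,T],\R)$ is inherited from the Aldous--Rebolledo criterion applied to the martingale decomposition of $Y^n_t(H_K)$ together with the tail estimate. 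The main obstacle will be controlling this tail error uniformly in time; in the regime $\beta < 1$ with $\rho \neq 1/2$ it is delicate because the asymmetric dynamics transport mass at macroscopic speed $\alpha(1-2\rho)n$, so one must either work in a moving frame or exploit cancellations between $J^n_{-1,0}$ and its mean to obtain an $L^2$ bound that survives the limit $K \to \infty$. The assumption $\rho \neq 1/2$ or $\beta \geq 1$ is precisely what guarantees that $a(t,s)$ is nontrivial at the scale $\sqrt n$.
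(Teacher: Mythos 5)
Your proposal follows essentially the same route as the paper, which omits the proof and states only that it follows \cite{gonccalves2008central} line by line by relating the current and the tagged particle to the density fluctuation field; your plan (martingale/Boltzmann--Gibbs CLT for $\Ycal^n_t$, approximation of $\mathbf{1}_{[0,\infty)}$ by Schwartz cut-offs with an $L^2$ tail bound, and the order-preservation identity reducing $\bar{X}^n$ to $\bar{J}^n_{-1,0}/\rho$) is exactly that argument. One small caveat: your order-preserving identity as written involves a.s.\ infinite sums $\sum_{x\ge k}\eta_x(t)$ and should be stated in the finite form $\{X^n(t)>x\}=\{J^n_{-1,0}(t)\ge \sum_{y=0}^{x}\eta_y(t)\}$ used in the paper's Section 4.4.
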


\begin{remark}
Note that when $\beta < 1$ and $\rho \neq 1/2$, the limiting process  is a  Brownian motion; when $\beta > 1$, the limit is a  fractional Brownian motion with Hurst parameter $1/4$.
\end{remark}

\begin{remark}
If $\beta=0$ and $\gamma = 1$, that is, for the ASEP, it was shown in \cite{balazs2010order} that  the variance of the current has order $n^{2/3}$ if $\rho = 1/2$.  For the TASEP, the one-point limit was proved in \cite{Johansson00} to be the Tracy-Widom distribution.
\end{remark}

\subsection{Moderate deviations}  In this subsection, we study moderate deviations for the current and the tagged particle.  
Let $\{a_n\}_{n \geq 1}$ be a sequence of real numbers such that \[\sqrt{n \log n} \ll a_n \ll n.\]  
Let $\mathbf{0}$ be the trajectory equals to zero at any time $0 \leq t \leq T$. Then, as $n \rightarrow \infty$,
\[\{\bar{J}^n_{-1,0} (t)/a_n, 0 \leq t \leq T \} \Rightarrow \mathbf{0}, \quad \{\bar{X}^n (t)/a_n, 0 \leq t \leq T \} \Rightarrow \mathbf{0}.\]
Moderate deviations concern the rate of the above convergence.

To introduce the MDP rate functions for the current and the tagged particle, we first recall the following  representation for $\{B_t^{1/4}, t \geq 0\}$ the fractional Brownian motion with Hurst parameter $1/4$ (see \cite{Decreu1999FBM} for example),
\[B_t^{1/4} = \int_0^t \mathcal{K} (t,s) d B_s, \quad t \geq 0,\]
where the kernel $ \mathcal{K} (t,s)$ is defined as
\[
\mathcal{K}(t,s)=\frac{(t-s)^{-1/4}}{\sqrt{V}\Gamma(3/4)}F(1/4, -1/4, 3/4, 1-\tfrac{t}{s}), \quad 0\leq s<t.
\]
Above, 
\[
V=\frac{8\Gamma(3/2)\cos(\pi/4)}{\pi}, \quad 
F(\alpha,\beta,\gamma,z)=\sum_{k=0}^{+\infty}\frac{(\alpha)_k(\beta)_k}{(\gamma)_kk!}z^k
\]
with $(a)_k := \Gamma(a+k)/\Gamma(a)$. 

If $\beta < 1$, we define $\mathcal{H}_\beta = C^1 ([0,T])$, the space of continuously differentiable functions on $[0,T]$. For $h \in \mathcal{H}_\beta$, denote by $\dot{h}$ the usual derivative of $h$.  In this case, the rate function for the current is defined as
	\[\mathcal{J}^\beta (h):= 
\frac{1}{2 \chi(\rho) \alpha |1-2\rho|}  \|\dot{h}\|_{L^2 ([0,T])}^2 \]
if $h \in \mathcal{H}_\beta$, and $= + \infty$ otherwise.

If $\beta > 1$, define $\mathcal{H}_\beta$ as the family of functions $h: [0,T] \rightarrow \R$ satisfying that there exists $\dot{h} \in L^2 ([0,T])$ such that
\[h (t) = \int_0^t \mathcal{K}(t,s) \dot{h} (s) ds, \quad \forall 0 \leq t \leq T.\]
In this case, the rate function for the current is defined as
	\[\mathcal{J}^\beta (h):= 	\frac{\sqrt{\pi}}{2  \sqrt{2} \chi(\rho)}  \|\dot{h}\|_{L^2 ([0,T])}^2.\]
if $h \in \mathcal{H}_\beta$, and $= + \infty$ otherwise. 

Finally, the rate function for the tagged particle is defined as
 \[\mathcal{X}^\beta (\cdot) := \rho^2 \mathcal{J}^\beta (\cdot).\]

Now, we state the main result of this article. 

\begin{theorem}\label{thm mdp current tagged particle} Assume that $\beta < 1, \rho \neq 1/2$ or $\beta > 1$. Then, the sequence  of processes $\{\bar{J}^n_{-1,0} (t)/a_n, 0 \leq t \leq T \}_{n \geq 1}$ under $\mathbb{P}_{\nu_{\rho}}$  (respectively $\{\bar{X}^n (t)/a_n, 0 \leq t \leq T \}_{n \geq 1}$ under $\mathbb{P}_{\nu_{\rho}^*}$) satisfies the moderate deviation principles in the space $D([0,T], \R)$ with decay rate $a_n^2/n$ and with rate function $\mathcal{J}^\beta$ (respectively with rate function $\mathcal{X}^\beta$). 	
	
Precisely speaking, for any closed set $\mathcal{C} \subset D([0,T], \R)$ and for any open set $\mathcal{O} \subset D([0,T],\R)$,
	\begin{align*}
		\limsup_{n \rightarrow \infty} \frac{n}{a_n^2} \log \mathbb{P}_{\nu_{\rho}} \Big(\{\bar{J}^n_{-1,0} (t)/a_n, 0 \leq t \leq T \} \in \mathcal{C}\Big) \leq - \inf_{h \in \mathcal{C}} \mathcal{J}^\beta (h),\\
		\liminf_{n \rightarrow \infty} \frac{n}{a_n^2} \log \mathbb{P}_{\nu_{\rho}} \Big(\{\bar{J}^n_{-1,0} (t)/a_n, 0 \leq t \leq T \} \in \mathcal{O}\Big) \geq - \inf_{h \in \mathcal{O}} \mathcal{J}^\beta (h).
	\end{align*}

	The precise meaning of the moderate deviation principles for the tagged particle is similar.
\end{theorem}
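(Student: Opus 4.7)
The plan is to reduce Theorem~\ref{thm mdp current tagged particle} to the moderate deviation principle for the density fluctuation field $\Ycal^n$ stated in Section~\ref{sec: density flieds}, and then to apply the contraction principle twice: once to transfer the MDP from the density field to the current, and a second time, implicit in the identification of the rate function, to recognize the resulting variational formula as the Cameron--Martin energy of the Gaussian limit of the current. The starting point is the microscopic conservation identity $\eta_x(t) - \eta_x(0) = J^n_{x-1,x}(t) - J^n_{x,x+1}(t)$. Summing over $x \geq 0$ and testing the density field against a smooth approximation $\phi_\varepsilon$ of $\mathbf{1}_{[0,\infty)}$ expresses $\bar{J}^n_{-1,0}(t)$ as a continuous linear functional of $\Ycal^n$, up to errors negligible at the moderate deviation scale $a_n$. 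For the tagged particle, the fact that particles cannot overtake one another yields the exact identity $J^n_{0,1}(t) = \sum_{y=1}^{X^n(t)} \eta_y(t)$, whose centered version reads $\bar{J}^n_{0,1}(t) = \rho \bar{X}^n(t) + (\text{density fluctuation on }[1,X^n(t)])$. An analogous smooth-test approximation turns this into the linear relation $\bar{X}^n(t) \approx \rho^{-1}\bar{J}^n_{-1,0}(t)$ at the MDP scale.

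The next step is to verify that the approximations above hold super-exponentially at speed $a_n^2/n$, which is the input needed to transport the MDP from $\Ycal^n$ to the current and tagged particle processes. Granted this, the contraction principle applied to the MDP for $\{\Ycal^n_t\}_{0 \leq t \leq T}$ yields an MDP for $\{\bar{J}^n_{-1,0}(t)/a_n\}$ with rate function given by the variational formula
\[
\tilde{\mathcal{J}}^\beta(h) = \inf\big\{\mathcal{I}^\beta(\Ycal)\,:\,\Phi(\Ycal)=h\big\},
\]
where $\mathcal{I}^\beta$ is the rate function for the density field MDP and $\Phi$ is the linear contraction identified in the first step.

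The main obstacle, and the heart of the simplification announced in the introduction, is to identify $\tilde{\mathcal{J}}^\beta$ explicitly with $\mathcal{J}^\beta$ without reproducing the Fourier calculations of \cite{xue2023moderate,xue2024sample}. The key observation is that $\mathcal{I}^\beta$ coincides with the Cameron--Martin rate function of the Gaussian limit of $\Ycal^n$ (a stationary Ornstein--Uhlenbeck-type field). Consequently, the contraction through $\Phi$ produces precisely the Cameron--Martin rate function of $\Phi$ applied to that Gaussian limit, which by Proposition~\ref{prop clt current tagged particle} is the Gaussian process with covariance $a(t,s)$. The Cameron--Martin space of a scalar Brownian motion, relevant in the case $\beta<1$, $\rho\neq 1/2$, is $H^1_0$ and gives the Dirichlet energy $(2\chi(\rho)\alpha|1-2\rho|)^{-1}\|\dot h\|_{L^2}^2$, matching $\mathcal{J}^\beta$ in that regime. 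In the case $\beta>1$, the Cameron--Martin space of fractional Brownian motion with Hurst parameter $1/4$, described through the kernel $\mathcal{K}(t,s)$ recalled in the theorem statement, produces exactly the rate function with prefactor $\sqrt{\pi}/(2\sqrt{2}\chi(\rho))$. The rate function $\mathcal{X}^\beta = \rho^2 \mathcal{J}^\beta$ for the tagged particle then follows immediately from the linear relation $\bar{X}^n \approx \rho^{-1}\bar{J}^n_{-1,0}$ under the contraction principle, the factor $\rho^2$ appearing as the square of the linear scaling. This back-and-forth use of the contraction principle, together with classical formulas for Cameron--Martin norms of Gaussian processes, replaces the long direct variational calculation and accounts for the simplification advertised in the introduction.
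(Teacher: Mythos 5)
Your overall strategy coincides with the paper's: reduce to the MDP for the density fluctuation field, exploit the fact that the MDP rate function $\mathcal{Q}^\beta$ equals the LDP rate function of the rescaled Gaussian limit $\tfrac{\sqrt{n}}{a_n}\mathcal{Y}_t$ (Lemma \ref{lem MPD of the fluctuation limit}) so that the contracted variational formula can be computed as a Gaussian contraction, and identify the result with the Cameron--Martin energy of the Gaussian process with covariance $a(t,s)$; the tagged particle is then handled through the ordering relation, with the factor $\rho^2$ arising exactly as you say. However, two steps that you treat as routine are the technical core of the argument and are missing. First, the map $\mu \mapsto \langle \mu_t - \mu_0, \chi_{[0,\infty)}\rangle$ is not continuous on $D([0,T],\Scal^\prime(\R))$ because $\chi_{[0,\infty)}$ is not a test function, so the contraction principle cannot be applied directly at the path level as your second paragraph asserts. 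The paper instead proves exponential tightness together with finite-dimensional MDPs and assembles the sample-path MDP via a projective-limit theorem; the contraction is only ever applied to the continuous functionals $\mu \mapsto \langle \mu_{t_i}-\mu_0,\tilde G_l\rangle$ with $\tilde G_l$ a Schwartz approximation of the truncated ramp $G_l$, after which $l\to\infty$ is taken using the covariance convergence \eqref{cov converg} for the lower bound, and, for the upper bound, the explicit solution of the rate-function PDE in Lemma \ref{lemma 5.3} together with the approximation argument of Lemma \ref{lemma contraction principle of current}.

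Second, the super-exponential negligibility of the approximation errors is precisely where the standing assumption $a_n \gg \sqrt{n\log n}$ enters, and the proof of \eqref{exp tight error} is not soft: it requires discretizing time into $n^3$ steps, a union bound over the grid (which is what consumes the $\sqrt{n\log n}$ margin), and a particle-labelling argument exploiting the preservation of order to control the oscillation of $\langle \mu^n_t, F_l\rangle$ between grid points. Your sketch states that the approximations "hold super-exponentially at speed $a_n^2/n$" but offers no mechanism; without it, neither the exponential tightness of the current (Lemma \ref{lem: exp tight current}) nor the transfer of the finite-dimensional MDP from $\tilde G_l$ to the indicator is justified. These are gaps of execution rather than of conception: the key observation and the Cameron--Martin identification are the ones the paper uses, but as written the proposal omits the parts of the proof that actually require work.
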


\begin{remark}\label{rmk: beta equals 1}
For $\beta = 1$,  the moderate deviation principles still hold for the current and the tagged particle.  The decay rate is $a_n^2/n$ as before. The rate functions are implicitly given by the following variational formulas: for $h \in D([0,T],\R)$,
\begin{align*}
	\mathcal{J}^\beta (h) &=\inf \Big\{\frac{1}{2} \mathbf{h}^T A^{-1} \mathbf{h}:  m \geq 1, 0 \leq t_1 < t_2 < \ldots < t_m \leq T, t_1,\ldots,t_m \in \Delta_c (h)\Big\},\\
	\mathcal{X}^\beta (h) &= \rho^2 \mathcal{J}^\beta (h),
\end{align*}
where $\mathbf{h} = (h(t_1), \ldots, h(t_m))^T \in \R^m$, $A = (a(t_i,t_j))_{1 \leq i,j \leq m}$ with $a(\cdot,\cdot)$ defined in \eqref{a t s}, and $\Delta_c (h)$ is the set of continuous points of $h$.  However, we are not aware of how to solve the above infimum explicitly so far.
\end{remark}

\begin{remark}
The above theorem should hold in the regime $\sqrt{n} \ll a_n \ll n$.  We need the technical assumption $a_n \gg \sqrt{n \log n}$ in order to prove the exponential tightness of the current and the tagged particle processes. However, we are not aware of how to remove this technical assumption.
\end{remark}

\section{Density fluctuation fields}\label{sec: density flieds}

In this section, we study the density fluctuation fields of the process. The  density fluctuation field $\Ycal^n_t$ of the WASEP, which acts on Schwartz functions $H \in \Scal (\R)$, is defined  as
\[\Ycal^n_t (H) = \frac{1}{\sqrt{n}} \sum_{x \in \Z} \Bar{\eta}_x (t)H(\tfrac{x}{n}),\]
where $\Bar{\eta}_x = \eta_x - \rho$.

The following result concerns the stationary fluctuations for the WASEP.  Its proof is based on the  martingale approach, which is standard in the theory of hydrodynamic limits, see \cite[Chapter 11]{klscaling}  for example.  For this reason, we omit the proof here.

\begin{proposition}\label{prop clt filed}
	Under $\Pbb_{\nu_\rho}$, the sequence of processes $\{\Ycal^n_t, 0 \leq t \leq T\}_{n \geq 1}$ converges in distribution in the space $D([0,T], \Scal^\prime (\R))$, as $n \rightarrow \infty$, to the process $\{\Ycal_t, 0 \leq t \leq T\}$, which is the unique solution to the following SPDE:
	\begin{enumerate}[(i)]
		\item if $\beta > 1$, then
		\[\partial_t \Ycal_t = \frac{1}{2} \Delta \Ycal_t + \sqrt{\chi (\rho)} \nabla d \mathcal{W}_t;\]
		\item if $\beta = 1$, then
		\[\partial_t \Ycal_t = \frac{1}{2} \Delta \Ycal_t - \alpha(1-2\rho) \nabla \Ycal_t+ \sqrt{\chi (\rho)} \nabla d \mathcal{W}_t;\]
		\item if $\beta < 1$, then
		\[\partial_t \Ycal_t  = - \alpha(1-2\rho) \nabla \Ycal_t.\]
	\end{enumerate}
	Moreover, the  initial distribution $\mathcal{Y}_0$ of the above processes satisfies that  the distribution of $\mathcal{Y}_0 (H)$ is normal  with mean $0$ and variance $\chi(\rho)\langle H, H\rangle$ for any $H\in \Scal(\R)$. 
\end{proposition}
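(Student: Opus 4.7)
The plan is to apply the classical Dynkin martingale approach, as developed in Chapter 11 of Kipnis-Landim. For each $H \in \Scal(\R)$,
\[M^n_t(H) := \Ycal^n_t(H) - \Ycal^n_0(H) - \int_0^t L_n \Ycal^n_s(H)\, ds\]
is a mean-zero martingale under $\Pbb_{\nu_\rho}$. I would begin by computing the drift via summation by parts. The symmetric contribution equals $\tfrac{n^{\gamma-2}}{2}\Ycal^n_s(\Delta_n H)$, which converges to $\tfrac{1}{2}\Ycal_s(\Delta H)$ when $\gamma=2$ (i.e.\ $\beta\geq 1$) and vanishes otherwise. The asymmetric contribution equals $\alpha n^{\gamma-\beta-1}$ times a discrete gradient of the field $\eta_x(1-\eta_{x+1})$; the decomposition
\[\eta_x(1-\eta_{x+1}) - \chi(\rho) = (1-2\rho)\bar{\eta}_x + \rho(\bar{\eta}_x - \bar{\eta}_{x+1}) - \bar{\eta}_x\bar{\eta}_{x+1}\]
isolates a linear piece that produces $-\alpha(1-2\rho)\Ycal^n_s(\nabla H)$ (active exactly when $\gamma-\beta=1$, i.e.\ $\beta\leq 1$) and a quadratic piece $\bar{\eta}_x\bar{\eta}_{x+1}$ to be absorbed by the Boltzmann-Gibbs principle.

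Next I would compute the quadratic variation $\langle M^n(H)\rangle_t$ via the carr\'e du champ. The symmetric part equals $n^{\gamma-2}\chi(\rho)\int_0^t \|\nabla H\|_{L^2}^2\, ds$ up to a Boltzmann-Gibbs replacement of $(\eta_x-\eta_{x+1})^2$ by its equilibrium mean $2\chi(\rho)$, converging to $\chi(\rho) t\|\nabla H\|_{L^2}^2$ when $\gamma=2$ and to zero when $\gamma=1+\beta<2$, matching the three regimes. The asymmetric contribution is always of strictly lower order. Tightness in $D([0,T],\Scal^\prime(\R))$ is obtained from Mitoma's criterion, which reduces to tightness of $\{\Ycal^n_\cdot(H)\}_n$ for each $H\in\Scal(\R)$, verified through Aldous's criterion using uniform bounds on the drift and Burkholder-Davis-Gundy on $M^n(H)$.

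To identify any subsequential limit $\Ycal$, I would pass to the limit in the martingale problem: $\Ycal$ solves the claimed SPDE in the sense that $\Ycal_t(H)$ equals $\Ycal_0(H)$ plus the corresponding drift integral plus a continuous Gaussian martingale with the deterministic quadratic variation identified above, which pins down the law uniquely by linearity and Gaussianity. The initial datum $\Ycal^n_0(H) = \frac{1}{\sqrt n}\sum_x \bar{\eta}_x(0) H(x/n)$ is a sum of bounded, centered, independent random variables under $\nu_\rho$, so the classical Lindeberg-Feller CLT yields $\Ycal_0(H) \sim \mathcal{N}(0,\chi(\rho)\langle H,H\rangle)$.

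The main obstacle is the Boltzmann-Gibbs replacement, sharpest in the critical regime $\beta=1$ where the symmetric and asymmetric dynamics balance: one must show
\[\lim_{n\to\infty}\E_{\nu_\rho}\!\left[\left(\int_0^T \frac{1}{\sqrt n}\sum_x (\nabla_n H)(x/n)\, \bar{\eta}_x(s)\bar{\eta}_{x+1}(s)\, ds\right)^{\!2}\right] = 0,\]
which follows from the $H_{-1}$ norm estimate exploiting the spectral gap of the symmetric generator $L_s$ on boxes together with the stationarity of $\nu_\rho$, as carried out in Kipnis-Landim. Once this replacement is in hand, combining drift convergence, the martingale CLT, and the initial CLT yields the claimed convergence in all three regimes.
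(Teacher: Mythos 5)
Your proposal is correct and is precisely the argument the paper has in mind: the authors omit the proof of this proposition, stating only that it follows from the standard martingale approach of Chapter 11 of Kipnis--Landim, which is exactly the scheme you execute (Dynkin martingale, drift and quadratic-variation computations in the three regimes $\beta<1$, $\beta=1$, $\beta>1$, Boltzmann--Gibbs replacement of the quadratic term $\bar{\eta}_x\bar{\eta}_{x+1}$, Mitoma/Aldous tightness, and uniqueness of the limiting Gaussian martingale problem). The only refinement worth noting is that for $\beta<1$ the time scale $n^{1+\beta}$ is sub-diffusive, so the Boltzmann--Gibbs estimate must be invoked in the form valid for hyperbolic-type scalings, as in Gon\c{c}alves' work on the ASEP cited by the paper, rather than the purely diffusive version of Kipnis--Landim, but this does not change the structure of your argument.
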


Next, we study large deviations for the limiting process $\mathcal{Y}_t$ in Proposition \ref{prop clt filed}. We first introduce the  rate function.  For  $\beta>0$ and $\mu\in D ([0, T], \Scal^\prime(\R))$, define
\begin{equation}\label{equ mdp rate function of the fluctuation field}
    \mathcal{Q}^\beta(\mu)=\mathcal{Q}_{\rm ini}(\mu_0)+\mathcal{Q}_{\rm dyn}^\beta(\mu),
\end{equation}
where $\mathcal{Q}_{\rm ini}$ corresponds to the deviation from the initial state, and $\mathcal{Q}_{\rm dyn}^\beta$ comes from the evolution of the dynamics. The initial rate function $\mathcal{Q}_{\rm ini}$ is defined as 
\[
\mathcal{Q}_{\rm ini}(\mu_0)=\sup_{h\in \Scal(\R)}\left\{\mu_0 (h)-\frac{\chi(\rho)}{2}\int_\mathbb{R}h^2(u)du\right\}, \quad \mu_0 \in \Scal^\prime(\R)
\]
To define $\mathcal{Q}_{\rm dyn}^\beta$, we introduce the following positive-definite quadratic form:  for given $T>0$ and $G,H\in D ([0, T], \Scal(\R))$,  define
\[
[G, H]:= \int_0^T\langle \nabla G_t, \nabla H_t \rangle dt,
\]
where $\langle \cdot,\cdot \rangle$ is the usual inner product in $L^2 (\R)$. Let $\mu\in D ([0, T], \Scal^\prime(\R))$.
\begin{itemize}
	\item For $\beta<1$,  define
	\begin{equation}\label{equ dynamic mdp rate sub}
		\mathcal{Q}_{\rm dyn}^\beta(\mu)=\sup_{H\in C_c^{1,+\infty}([0, T]\times \mathbb{R})}\left\{\mu_T(H_T)-\mu_0(H_0)-\int_0^T\mu_t\left((\partial_t+\alpha(1-2\rho)\nabla)H_t\right)dt\right\}.    
	\end{equation}
	\item For $\beta>1$, define
	\begin{equation}\label{equ dynamic mdp rate sup}
		\mathcal{Q}_{\rm dyn}^\beta(\mu)
		=\sup_{H\in C_c^{1,+\infty}([0, T]\times \mathbb{R})}\left\{\mu_T(H_T)-\mu_0(H_0)-\int_0^T\mu_t\left((\partial_t+\frac{1}{2}\Delta)H_t\right)dt-\frac{\chi(\rho)}{2}[H, H]\right\}.  
	\end{equation}
	\item For $\beta=1$, define
	\begin{equation}\label{equ dynamic mdp rate cri}
		\mathcal{Q}_{\rm dyn}^1(\mu)
		=\sup_{H\in C_c^{1,+\infty}([0, T]\times \mathbb{R})}\left\{\mu_T(H_T)-\mu_0(H_0)-\int_0^T\mu_t\left((\partial_t+\mathcal{P})H_t\right)dt-\frac{\chi(\rho)}{2}[H, H]\right\}, 
	\end{equation}
	where $\mathcal{P}=\frac{1}{2}\Delta+\alpha(1-2\rho)\nabla$. 
\end{itemize}
Below, we write $\mathcal{Q}_{\rm dyn}^\beta, \mathcal{Q}^\beta$ as $\mathcal{Q}_{{\rm dyn}, T}^\beta, \mathcal{Q}_T^\beta$ respectively when we need to emphasize the dependence of the rate functions on the time horizon $T$. 

For later use, we give some properties of the above rate functions.  For any $G,H \in C_c^{1, +\infty}([0, T]\times \R)$, define the equivalence relation  
\[G\sim H \quad \text{if and only if} \quad [G-H, G-H]=0.\] 
We denote by $\mathbb{H}$ the completion of $C_c^{1, +\infty}([0, T]\times \R)/\sim$ under the inner product $[\cdot, \cdot]$. 

\begin{lemma}\label{lemma finite rate} 
$(1)$ If $\mu_0 \in \Scal^\prime(\R)$ satisfies that $\mathcal{Q}_{\rm ini}(\mu_0)<+\infty$, then there exists $\varphi \in L^2(\R)$ such that
\[
\mu_0 (H) = \langle \varphi, H\rangle, \; \forall H \in L^2 (\R), \quad \text{and} \quad \mathcal{Q}_{\rm ini}(\mu_0)=\frac{1}{2\chi(\rho)} \|\varphi\|_{L^2 (\R)}^2.
\]
$(2)$ If $\mu\in D ([0, T], \Scal^\prime(\R))$ satisfies that $\mathcal{Q}^\beta(\mu)<+\infty$, then $\mu\in C([0, T], L^2(\R))$. Moreover, we have the following characterizations of  $\mu$ and the rate function.

\begin{itemize}
	\item If  $\beta>1$, then there exists $G\in \mathbb{H}$ such that $\mu$ is the unique weak solution to the PDE
	\[\begin{cases}
	\partial_t \mu(t,u)=\frac{1}{2}\Delta\mu(t,u)-\Delta G(t,u), \text{~}t> 0, u\in \mathbb{R},\\
	\mu(0,u) = \varphi (u), \; u \in \R,
	\end{cases}\]
where $\varphi$ is identified in $(1)$. Moreover, $\mathcal{Q}^\beta_{\rm dyn}(\mu)=[G,G] / (2 \chi(\rho))$.
\item If  $\beta=1$, then there exists $G\in \mathbb{H}$ such that $\mu$ is the unique weak solution to the PDE
\[\begin{cases}
	\partial_t \mu(t,u)=\mathcal{P}^* \mu(t,u)-\Delta G(t,u), \text{~}t\geq 0, u\in \mathbb{R},\\
	\mu(0,u) = \varphi (u), \; u \in \R.
\end{cases}\]
Here, $\mathcal{P}^*$ is the adjoint of $\mathcal{P}$ in $L^2 (\R)$. Moreover, $\mathcal{Q}^\beta_{\rm dyn}(\mu)=[G,G] / (2 \chi(\rho))$.
\item If  $\beta< 1$, then 
\[
\mu(t,u)=\varphi( u-\alpha(1-2\rho)t), \text{~}t\geq 0, u\in \mathbb{R}.
\]
Moreover, $\mathcal{Q}^\beta_{\rm dyn}(\mu)=0$.
\end{itemize}
\end{lemma}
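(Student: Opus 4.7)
The plan is to handle the two parts of the lemma by reducing each variational supremum to a Riesz representation problem in an appropriate Hilbert space, and then to identify the resulting representer with the solution of a PDE.

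For part (1), I would start from the inequality $\mathcal{Q}_{\rm ini}(\mu_0) \geq \lambda \mu_0(h) - \frac{\chi(\rho)\lambda^2}{2}\|h\|_{L^2}^2$, valid for every $\lambda\in\R$ and every $h\in\Scal(\R)$, and optimize in $\lambda$ to obtain
\[|\mu_0(h)| \leq \sqrt{2\chi(\rho)\,\mathcal{Q}_{\rm ini}(\mu_0)}\;\|h\|_{L^2(\R)}.\]
Since $\Scal(\R)$ is dense in $L^2(\R)$, this means $\mu_0$ extends to a bounded linear functional on $L^2(\R)$; the Riesz representation theorem then gives the desired $\varphi\in L^2(\R)$. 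Plugging $\mu_0(h)=\langle\varphi,h\rangle$ back into the variational formula and recognizing the maximizer (approximately) as $h=\varphi/\chi(\rho)$ yields the identity $\mathcal{Q}_{\rm ini}(\mu_0)=\|\varphi\|_{L^2}^2/(2\chi(\rho))$.

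For part (2), I would use a unified template over the three regimes. Given $\mu$ with $\mathcal{Q}^\beta(\mu)<+\infty$, define the linear functional on $C_c^{1,+\infty}([0,T]\times\R)$ by
\[L^\beta(H) := \mu_T(H_T)-\mu_0(H_0)-\int_0^T \mu_t\bigl((\partial_t+\mathcal{A}^\beta) H_t\bigr)\,dt,\]
where $\mathcal{A}^\beta$ is $\alpha(1-2\rho)\nabla$, $\tfrac{1}{2}\Delta$, or $\mathcal{P}$ according to whether $\beta<1$, $\beta>1$, or $\beta=1$. For $\beta>1$ and $\beta=1$, the scaling trick $H\mapsto\lambda H$ together with optimization in $\lambda$ yields
\[|L^\beta(H)| \leq \sqrt{2\,\chi(\rho)\,\mathcal{Q}_{\rm dyn}^\beta(\mu)\,[H,H]},\]
so $L^\beta$ descends to a continuous linear functional on the quotient $C_c^{1,+\infty}/\!\!\sim$ and extends to $\mathbb{H}$. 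Riesz representation in $\mathbb{H}$ then produces $G\in\mathbb{H}$ with $L^\beta(H)=[G,H]$. Substituting back and optimizing identifies $\mathcal{Q}_{\rm dyn}^\beta(\mu)=[G,G]/(2\chi(\rho))$, while the relation $L^\beta(H)=[G,H]=-\int_0^T\langle \Delta G_t,H_t\rangle\,dt$ (integration by parts, valid for $H\in C_c^{1,+\infty}$) is precisely the weak formulation of the claimed PDEs with source $-\Delta G$ and initial datum $\varphi$ obtained from part (1). For $\beta<1$, the rate function has no quadratic penalty, so the same scaling forces $L^\beta(H)\equiv 0$; this is the weak form of the transport equation $\partial_t\mu+\alpha(1-2\rho)\nabla\mu=0$, whose unique solution starting from $\varphi$ is the claimed shift $\varphi(u-\alpha(1-2\rho)t)$.

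The $C([0,T],L^2(\R))$ regularity and uniqueness of the PDE solutions would then be derived by standard energy estimates: for $\beta\geq 1$ the source $-\Delta G$ lies in $L^2(0,T;H^{-1}(\R))$ because $G\in L^2(0,T;H^1(\R))$ by definition of $\mathbb{H}$, so classical well-posedness for the heat (or drift-diffusion) equation with $H^{-1}$ forcing applies; for $\beta<1$ the transport case is explicit. The main obstacle I anticipate is not the Riesz step itself but the verification that $L^\beta$ is well-defined on equivalence classes of $\sim$ and extends to all of $\mathbb{H}$, since a priori the boundary terms $\mu_T(H_T),\mu_0(H_0)$ and the time integral are not separately controlled by $[H,H]^{1/2}$. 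This is resolved precisely by the scaling bound above, which shows the combination is so controlled; once this is in hand, density of smooth compactly supported functions in $\mathbb{H}$ and the Riesz theorem finish the argument.
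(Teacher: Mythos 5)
Your proposal is correct and follows essentially the same route as the paper: part (1) and the $\beta\geq 1$ cases of part (2) via the scaling bound $|L^\beta(H)|\leq\sqrt{2\chi(\rho)\,\mathcal{Q}^\beta_{\rm dyn}(\mu)\,[H,H]}$ followed by Riesz representation (which the paper defers to the references of Gao--Quastel and Kipnis--Olla--Varadhan), and the $\beta<1$ case by observing that the absence of a quadratic penalty forces the linear functional to vanish, yielding the transport equation whose solution is the shift of $\varphi$. The only detail the paper spells out that you compress is the uniqueness step for the transport equation, done there by checking $\frac{d}{dt}\mu_t(S_{-t}h)=0$ for the translation operator $S_s$; this is routine and your sketch is otherwise complete.
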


\begin{proof}[Proof of Lemma \ref{lemma finite rate}]
We only prove the case $\beta<1$ in (2) since the remaining statements follow from  Riesz representation theorem directly, see \cite{gao2003moderate, kipnis1989hydrodynamics} for example. If there exists $H\in C_c^{1,+\infty}([0, T]\times \mathbb{R})$ such that
\[
\mu_T(H_T)-\mu_0(H_0)-\int_0^T\mu_t\left((\partial_t+\alpha(1-2\rho)\nabla)H_t\right)dt\neq 0,
\]
then
\[
\mu_T(aH_T)-\mu_0(aH_0)-\int_0^T\mu_t\left((\partial_t+\alpha(1-2\rho)\nabla)(aH_t)\right)dt\rightarrow+\infty
\]
as $a\rightarrow+\infty$ or $a\rightarrow -\infty$, and hence $\mathcal{Q}^\beta_{\rm dyn}(\mu)=+\infty$. Consequently, if $\mathcal{Q}^\beta(\mu)<+\infty$, we must have
\[
\mu_T(H_T)-\mu_0(H_0)-\int_0^T\mu_t\left((\partial_t+\alpha(1-2\rho)\nabla)H_t\right)dt=0
\]
for any $H\in C_c^{1,+\infty}([0, T]\times \mathbb{R})$ and hence $\mathcal{Q}_{\rm dyn}^\beta(\mu)=0$.

Take the test function $H$ with the form $H(t,u)=b_th(u)$ for some $h\in C_c^\infty(\R), b\in C^1 ([0, T])$, then
\[
b_T\mu_T(h)-b_0\mu_0(h)-\int_0^Tb^\prime_t\mu_t(h)dt=\int_0^Tb_t\alpha(1-2\rho)\mu_t(\nabla h)dt.
\]
Since $b$ is arbitrary, we have
\[
\frac{d}{dt}\mu_t(h)=\alpha(1-2\rho)\mu_t(\nabla h).
\]
For $s\in \R$, let $S_{s}$ be the translation operator:  $S_{s}h(u)=h(u+\alpha(1-2\rho)s)$.  Then, for any $t\geq 0$,
\[
\frac{d}{dt}\mu_t(S_{-t}h)=\alpha(1-2\rho)\mu_t(\nabla S_{-t}h)-\alpha(1-2\rho)\mu_t(\nabla S_{-t}h)=0.
\]
 Hence,
\[
\mu_t(h)=\mu_0 (S_th)=\int_{\R}\varphi(u-\alpha(1-2\rho)t)h(u)du
\]
for any $h\in C_c^\infty(\R)$, thus concluding the proof. 
\end{proof}

Since the process $\mathcal{Y}_t$ is Gaussian, the following result is straightforward and thus the proof is omitted.  

\begin{lemma}\label{lem MPD of the fluctuation limit}
The sequence of processes $\left\{\frac{\sqrt{n}}{a_n}\mathcal{Y}_t:~0\leq t\leq T\right\}_{n\geq 1}$ satisfies the large deviation principles with decay rate $a_n^2/n$ and with rate function $\mathcal{Q}^\beta$. 
\end{lemma}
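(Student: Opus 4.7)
The plan is to treat this as a small-noise Gaussian LDP. Writing $\mathcal{Y}^n_t := (\sqrt{n}/a_n)\mathcal{Y}_t$, the process is obtained by rescaling a fixed centered Gaussian element of $D([0,T],\Scal'(\R))$ by a deterministic factor $\sqrt{n}/a_n \to 0$. By the general LDP for Gaussian measures on topological vector spaces (Freidlin--Wentzell / Cameron--Martin theory; e.g.\ Deuschel--Stroock), such a family automatically satisfies an LDP at speed $a_n^2/n$ whose rate function is one half of the squared norm of the reproducing kernel Hilbert space (RKHS) of $\mathcal{Y}$. The work therefore reduces to (i) establishing exponential tightness in the Skorokhod topology on $\Scal'(\R)$ and (ii) identifying the RKHS norm with $\mathcal{Q}^\beta$.

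For exponential tightness, I would invoke Mitoma's criterion, reducing to tightness of each real-valued process $t \mapsto \mathcal{Y}^n_t(H)$ for $H \in \Scal(\R)$. Since these are centered Gaussian processes with explicit covariance read off from Proposition~\ref{prop clt filed}, Kolmogorov--Chentsov together with the standard Gaussian tail estimate $\Pbb(|Z| \geq r) \leq 2e^{-r^2/(2\sigma^2)}$ give the required superexponential control at speed $a_n^2/n$. Next, for any continuous linear functional $\ell$ on $D([0,T],\Scal'(\R))$ of finite-dimensional type, the scaled log moment generating functional satisfies
\[
\frac{n}{a_n^2}\log \E\!\left[\exp\!\bigl((a_n^2/n)\,\ell(\mathcal{Y}^n)\bigr)\right] = \tfrac{1}{2}\mathrm{Var}\bigl(\ell(\mathcal{Y})\bigr),
\]
independent of $n$. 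An application of Gärtner--Ellis (or Bryc's lemma) in this Gaussian setting then yields the LDP with rate function the Legendre transform of $\ell \mapsto \tfrac{1}{2}\mathrm{Var}(\ell(\mathcal{Y}))$.

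To match this Legendre transform with $\mathcal{Q}^\beta$, I would use the decomposition $\mathcal{Y}_t = \mathcal{Y}_t^{\mathrm{ini}} + \mathcal{Y}_t^{\mathrm{dyn}}$, where $\mathcal{Y}^{\mathrm{ini}}$ solves the deterministic part of the SPDE with the random initial datum and $\mathcal{Y}^{\mathrm{dyn}}$ is the mild solution starting from zero driven by the noise $\sqrt{\chi(\rho)}\nabla dW_t$. These two Gaussians are independent, so their variances add and the Legendre transform splits as the infimal convolution, yielding exactly $\mathcal{Q}^\beta = \mathcal{Q}_{\mathrm{ini}} + \mathcal{Q}_{\mathrm{dyn}}^\beta$ of \eqref{equ mdp rate function of the fluctuation field}. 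The initial piece is immediate since $\mathcal{Y}_0(H)$ has variance $\chi(\rho)\langle H,H\rangle$, whose Legendre dual is $\mathcal{Q}_{\mathrm{ini}}$. For $\mathcal{Q}_{\mathrm{dyn}}^\beta$ in the cases $\beta\geq 1$, the mild formulation combined with It\^o isometry identifies $\mathrm{Var}\bigl(\mathcal{Y}_t^{\mathrm{dyn}}(H_t) - \int_0^t\mathcal{Y}_s^{\mathrm{dyn}}((\partial_s+\mathcal{L})H_s)ds\bigr) = \chi(\rho)[H,H]$, whose Legendre transform against the linear constraint recovers the suprema \eqref{equ dynamic mdp rate sup}--\eqref{equ dynamic mdp rate cri} verbatim.

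The main obstacle is the sub-critical case $\beta<1$, where $\mathcal{Y}^{\mathrm{dyn}} \equiv 0$ and the process is purely deterministic transport of the initial Gaussian datum. Here the variance of $\ell(\mathcal{Y})$ is degenerate along all functionals orthogonal to the transport flow, so the Legendre transform produces the $\{0,+\infty\}$ dichotomy captured by $\mathcal{Q}_{\mathrm{dyn}}^\beta$ in \eqref{equ dynamic mdp rate sub}. Verifying this collapse carefully on the closed subspace of trajectories of the form $\mu_t = S_t\mu_0$, consistent with Lemma~\ref{lemma finite rate}(2), is the only point requiring some care; everything else reduces to the standard Gaussian LDP machinery.
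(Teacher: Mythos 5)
Your proposal is correct and is essentially the argument the paper has in mind: the paper omits the proof entirely, remarking only that the result is ``straightforward'' because $\mathcal{Y}$ is Gaussian, and what you have written out --- the generalized Schilder/Gaussian LDP for $\epsilon_n\mathcal{Y}$ with $\epsilon_n=\sqrt{n}/a_n$, exponential tightness via Mitoma plus Gaussian tails, and identification of the RKHS rate function with $\mathcal{Q}_{\rm ini}+\mathcal{Q}^\beta_{\rm dyn}$ through the independent decomposition $\mathcal{Y}=\mathcal{Y}^{\rm ini}+\mathcal{Y}^{\rm dyn}$ --- is the standard filling-in of that remark, consistent with Lemma \ref{lemma finite rate}. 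The only caveat worth recording is that $D([0,T],\Scal'(\R))$ with the Skorokhod topology is not a topological vector space, so the G\"artner--Ellis step is best run through finite-dimensional distributions plus exponential tightness (as in \cite[Theorem 4.28]{Feng2006LDP}, used elsewhere in the paper) rather than literally via continuous linear functionals on the path space.
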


Finally, we study moderate deviations for the  density fluctuation field. The rescaled density fluctuation field $\mu^n=\{\mu^n_t\}_{0\leq t\leq T}$ of the process is defined as  
\begin{equation}\label{equ intial rate funtion}
	\mu^n_t(du)=\frac{1}{a_n}\sum_{x\in \mathbb{Z}}\overline{\eta}_x(t)\delta_{x/n}(du)
\end{equation}
where $\delta_a(du)$ is the Kronecker Dirac measure concentrated at the point $a$.

\begin{proposition}\label{prop MPD of the scaled density field}
	The rescaled density field $\{\mu^n_t, 0 \leq t \leq T\}_{n\geq 1}$ satisfies the moderate deviation principles in the space $D([0,T],\mathcal{S}^\prime (\R))$ with decay rate $a_n^2 / n$ and with rate function $\mathcal{Q}^\beta$.    
\end{proposition}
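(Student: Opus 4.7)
The plan is to adapt the SSEP moderate deviation principle of \cite{zhao2024moderate} to the weakly asymmetric setting, tracking how the additional operator $\alpha n^{-\beta}L_a$ contributes in each regime of $\beta$. The argument splits into the Dynkin decomposition, an exponential martingale argument for the upper bound, a change-of-measure for the lower bound, and exponential tightness.

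First, for $H\in C_c^{1,\infty}([0,T]\times\R)$ I would write the Dynkin decomposition
\[M_t^n(H) = \mu_t^n(H_t) - \mu_0^n(H_0) - \int_0^t \mu_s^n\bigl((\partial_s+L_n)H_s\bigr)\,ds,\]
and compute $L_n\mu_s^n(H)$ via discrete summation by parts. Three macroscopic contributions appear: the diffusive operator $\tfrac12\Delta$ with prefactor $n^{\gamma-2}$ (surviving iff $\beta\ge 1$), the centered asymmetric drift $\alpha(1-2\rho)\nabla$ with prefactor $n^{\gamma-\beta-1}$ (surviving iff $\beta\le 1$), and a bilinear remainder of the form $\tfrac{\alpha n^{\gamma-\beta-1}}{a_n}\sum_x \bar\eta_x(s)\bar\eta_{x+1}(s)\nabla H(s,x/n)$. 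The choice $\gamma=\min\{1+\beta,2\}$ is exactly tuned so that the surviving linear operators match those appearing in \eqref{equ dynamic mdp rate sub}, \eqref{equ dynamic mdp rate sup} and \eqref{equ dynamic mdp rate cri}.

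Next I would introduce the mean-one exponential martingale
\[\mathcal{E}_t^n(H) = \exp\Bigl\{\tfrac{a_n^2}{n}\bigl[\mu_t^n(H_t) - \mu_0^n(H_0)\bigr] - \int_0^t e^{-\tfrac{a_n^2}{n}\mu_s^n(H_s)}(\partial_s+L_n)e^{\tfrac{a_n^2}{n}\mu_s^n(H_s)}\,ds\Bigr\}.\]
Since a single microscopic jump changes $\tfrac{a_n^2}{n}\mu_s^n(H_s)$ by $O(a_n/n^2)=o(1)$, I can Taylor expand the integrand to second order. The first-order part reproduces the drift analysed above; the second-order part, after the local replacement $(\eta_{x+1}-\eta_x)^2\leftrightarrow 2\chi(\rho)$, contributes $\tfrac{\chi(\rho)}{2}[H,H]$ over $[0,T]$. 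Dividing by $a_n^2/n$ and applying Chebyshev's inequality to $\mathcal{E}_T^n(H)$ yields the upper bound on compact sets after taking the supremum over $H$; the initial piece $\mathcal{Q}_{\rm ini}$ comes from the classical moderate deviation principle for the Bernoulli product measure $\nu_\rho$ applied to $\mu_0^n$. Extending to closed sets requires exponential tightness in $D([0,T],\mathcal{S}'(\R))$, which I would obtain from an exponential Aldous-Rebolledo criterion applied to the predictable part and the quadratic variation of $M^n(H)$; this is one place where the hypothesis $a_n\gg\sqrt{n\log n}$ enters.

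The lower bound is obtained by the standard Radon-Nikodym change of measure: for any $\mu$ with $\mathcal{Q}^\beta(\mu)<\infty$, Lemma \ref{lemma finite rate} supplies a field $G\in\mathbb{H}$ whose associated exponential tilt of $\Pbb_{\nu_\rho}$ makes $\mu^n$ concentrate on $\mu$, with Girsanov cost exactly $\mathcal{Q}^\beta(\mu)$. The main obstacle in both bounds is the \emph{moderate deviation Boltzmann-Gibbs principle}: one must show, at scale $a_n^2/n$, that the bilinear remainder $\tfrac{1}{a_n}\sum_x \bar\eta_x\bar\eta_{x+1}\nabla H$ is super-exponentially negligible when $\beta\le 1$, so that only the Gaussian-type quadratic cost $\tfrac{\chi(\rho)}{2}[H,H]$ survives. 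This relies on the reversibility of $\nu_\rho$ under $L_s$, the spectral gap of $L_s$ on mesoscopic blocks, and Feynman-Kac bounds on exponential moments of local cylinder functions in the spirit of the Kipnis-Varadhan estimates, and it is again where the condition $a_n\gg\sqrt{n\log n}$ is used.
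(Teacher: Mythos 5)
Your proposal is correct and follows exactly the route the paper itself takes: the paper omits the proof of this proposition and defers to \cite{zhao2024moderate}, whose argument is precisely the Dynkin/exponential-martingale scheme you describe, with the tuning $\gamma=\min\{1+\beta,2\}$ selecting which of $\tfrac12\Delta$, $\alpha(1-2\rho)\nabla$ and the noise term $\tfrac{\chi(\rho)}{2}[H,H]$ survive in each regime, and with the superexponential Boltzmann--Gibbs replacement of the bilinear term $\bar\eta_x\bar\eta_{x+1}$ as the main technical step. Your bookkeeping of the prefactors ($n^{\gamma-2}$ for the Laplacian, $n^{\gamma-\beta-1}$ for the drift, the $O(a_n/n^2)$ jump size in the exponential martingale) and of the resulting constants matches the rate functions \eqref{equ dynamic mdp rate sub}--\eqref{equ dynamic mdp rate cri}.
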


 Its proof is similar to \cite{zhao2024moderate}, where the MDP was proved for the process with generator $n^2 (L_s + \alpha n^{-\beta} L_a)$. For that reason, we omit the proof here.

\section{Proof of Theorem \ref{thm mdp current tagged particle}}\label{sec: proof}

In this section, we prove moderate deviation principles for the current and the tagged particle. In Subsection \ref{subsec: current}, we prove sample path moderate deviation principles for the current by assuming the exponential tightness and finite dimensional moderate deviation principles of the current. The exponential tightness for the current are proved in Subsection \ref{subsec: pf lemma 4.1}, and the finite dimensional moderate deviation principles are proved in Subsection \ref{subsec:pf lemma 4.2}.  Finally, sample path moderate deviation principles for the tagged particle are outlined in Subsection \ref{subsec:tagged}.

\subsection{MDP for the current}\label{subsec: current} To prove the sample path MDP for the current, we only need to prove the exponential tightness and finite-dimensional MDP for the process $\{\bar{J}^n_{-1,0} (t)/a_n, 0 \leq t \leq T\}_{n \geq 1}$, which are summarized in the following two lemmas. 

\begin{lemma}\label{lem: exp tight current}
The sequence of processes $\{\bar{J}^n_{-1,0}(t) / a_n, 0 \leq t \leq T\}_{n \geq 1}$ is exponentially tight.
\end{lemma}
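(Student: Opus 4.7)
The plan is to deduce the exponential tightness of $\bar J^n_{-1,0}/a_n$ from that of the rescaled density field $\mu^n$ (which is built into Proposition \ref{prop MPD of the scaled density field}), by writing the current as a continuous functional of $\mu^n$ modulo a super-exponentially negligible remainder. The starting point is the mass-conservation identity
\[\bar J^n_{-1,0}(t) \;=\; \bar J^n_{M,M+1}(t) \;+\; \sum_{x=0}^{M}\big[\bar\eta_x(t) - \bar\eta_x(0)\big], \qquad M \in \Z_{\geq 0},\]
with $\bar J^n_{M,M+1}(t) := J^n_{M,M+1}(t) - t\alpha n^{\gamma-\beta}\chi(\rho)$; this follows from mass conservation together with translation invariance of $\nu_\rho$, which equalizes the centering at every bond. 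Averaging over $M \in \{0, \ldots, \lfloor\epsilon n\rfloor\}$ yields, after rearrangement,
\[\frac{\bar J^n_{-1,0}(t)}{a_n} \;=\; A^n_\epsilon(t) \;+\; \tilde\mu^n_t(H_\epsilon), \qquad \tilde\mu^n_t(H) := \mu^n_t(H)-\mu^n_0(H),\]
where $H_\epsilon(u) = (1-u/\epsilon)_+\mathbf{1}_{\{u\geq 0\}}$ is the tent function on $[0,\epsilon]$ and $A^n_\epsilon(t) = \frac{1}{(\lfloor\epsilon n\rfloor + 1)a_n}\sum_{M=0}^{\lfloor\epsilon n\rfloor}\bar J^n_{M,M+1}(t)$ is the averaged centered current.

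Since $H_\epsilon$ is continuous and compactly supported (and approximable in the norms associated with the rate function $\mathcal{Q}^\beta$ by Schwartz test functions, cf.\ Lemma \ref{lemma finite rate}), the map $\mu \mapsto \tilde\mu_\cdot(H_\epsilon)$ is continuous on $D([0,T],\Scal^\prime(\R))$, so Proposition \ref{prop MPD of the scaled density field} combined with the contraction principle immediately delivers the exponential tightness of $\{\tilde\mu^n_\cdot(H_\epsilon)\}_n$ in $D([0,T],\R)$ at decay rate $a_n^2/n$.

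The main obstacle and only substantive step is to prove the super-exponential negligibility of the averaged current: for every $\delta > 0$,
\[\lim_{\epsilon\to 0}\limsup_{n\to\infty}\frac{n}{a_n^2}\log \Pbb_{\nu_\rho}\Big(\sup_{0 \leq t \leq T}|A^n_\epsilon(t)| > \delta\Big) = -\infty.\]
I would prove this via the Dynkin decomposition $A^n_\epsilon = \mathcal{M}^n_\epsilon + \mathcal{R}^n_\epsilon$. Since jumps on distinct bonds never coincide, the compensated martingales attached to different $\bar J^n_{M,M+1}$ are mutually orthogonal, so the predictable quadratic variation of $\mathcal{M}^n_\epsilon$ is of order $n^{\gamma-1}T/(\epsilon a_n^2)$ while its jumps are bounded by $O(1/(\epsilon n a_n))$. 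A combination of Doob's maximal inequality with the exponential Bennett-type bound for compensated pure-jump martingales, together with the assumption $a_n \gg \sqrt{n\log n}$ used to absorb the logarithmic factor coming from a chaining over a dyadic partition of $[0,T]$, then yields the required super-exponential decay for $\epsilon$ sufficiently small. The compensator $\mathcal{R}^n_\epsilon$ is a space-averaged local function of $\eta$ which, after a one-block replacement in equilibrium, is dominated by a small continuous functional of $\mu^n$ and is thus super-exponentially negligible by Proposition \ref{prop MPD of the scaled density field}. Combining these bounds, $\bar J^n_{-1,0}/a_n$ and $\tilde\mu^n_\cdot(H_\epsilon)$ are exponentially equivalent, so the exponential tightness of the former follows from that of the latter. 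The hardest part will be the quantitative martingale estimate, because naive Azuma/BDG bounds applied to a single unaveraged $\bar J^n_{-1,0}$ are too weak at the MDP scale when $\gamma=2$; it is precisely the spatial averaging over $\epsilon n$ bonds that supplies the missing cancellation.
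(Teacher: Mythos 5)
Your overall architecture is the same as the paper's: write $\bar J^n_{-1,0}$ as (spatially averaged current) $+$ (density field tested against a tent function), kill the averaged current by a martingale estimate, and import exponential tightness from the field. However, there is a fatal sign error in the direction of your limit, plus a glossed-over step that is in fact where all the work (and the hypothesis $a_n\gg\sqrt{n\log n}$) lives.

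First, the direction of the averaging window. You average over $M\in\{0,\dots,\lfloor\epsilon n\rfloor\}$ and claim that $\sup_t|A^n_\epsilon(t)|$ is super-exponentially negligible as $\epsilon\to 0$. Run your own variance computation: the compensated martingales on distinct bonds are orthogonal, each with predictable quadratic variation of order $n^\gamma T$, so $\langle \mathcal{M}^n_\epsilon\rangle_T\asymp \frac{1}{(\epsilon n a_n)^2}\cdot \epsilon n\cdot n^{\gamma}T = \frac{n^{\gamma-1}T}{\epsilon a_n^2}$, and the exponential martingale bound gives
\[
\frac{n}{a_n^2}\log \Pbb_{\nu_\rho}\Big(\sup_{0\le t\le T}|\mathcal{M}^n_\epsilon(t)|>\delta\Big)\;\lesssim\; -\,\frac{n}{a_n^2}\cdot\frac{\delta^2\epsilon a_n^2}{n^{\gamma-1}T}\;=\;-\,\frac{\delta^2\epsilon\, n^{2-\gamma}}{T},
\]
which for $\gamma=2$ equals $-\delta^2\epsilon/T$ and tends to $0$, not $-\infty$, as $\epsilon\to 0$. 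A structural sanity check confirms the estimate cannot be true: as $\epsilon\to 0$ the field term $\tilde\mu^n_t(H_\epsilon)$ is itself super-exponentially negligible (its MDP rate is $\asymp r^2/(\chi(\rho)\|H_\epsilon\|_{L^2}^2)$ with $\|H_\epsilon\|_{L^2}^2=\epsilon/3\to 0$), so if $A^n_\epsilon$ were also negligible your exact identity would force $\bar J^n_{-1,0}/a_n$ to be super-exponentially negligible, contradicting the nontrivial rate function $\mathcal{J}^\beta$. The correct move, as in \eqref{current decom}, is to take the window $[0,ln]$ with $l\to\infty$: then the averaged current has the prefactor $1/(n a_n l)$, the bound above becomes $-\delta^2 l n^{2-\gamma}/T\to-\infty$, and the field term $\langle\mu^n_t-\mu^n_0,G_l\rangle$ carries the full fluctuation, with covariance converging to $a(t,s)$ as $l\to\infty$ by \eqref{cov converg}.

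Second, the step you dispose of in a parenthetical --- ``$H_\epsilon$ is continuous and compactly supported (and approximable \dots), so $\mu\mapsto\tilde\mu_\cdot(H_\epsilon)$ is continuous on $D([0,T],\Scal'(\R))$'' --- is not legitimate: evaluation of a distribution against a non-Schwartz function is not a continuous functional in the weak topology of $\Scal'(\R)$, and the tent function is not smooth. The paper's proof spends most of its length exactly here: it replaces $G_l$ by $\tilde G_l\in\Scal(\R)$ with $\|\tilde G_l-G_l\|_{L^2}\le Cl^{-1}$ and proves the super-exponential estimate \eqref{exp tight error} for $\sup_t|\langle\mu^n_t,\tilde G_l-G_l\rangle|$ via a time discretization into $n^3$ steps, a particle-labeling/order-preservation argument, and Poisson tail bounds; the assumption $a_n\gg\sqrt{n\log n}$ is consumed precisely in this step, not in your martingale chaining. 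As written, your proof never uses that assumption where it is actually needed, which is a signal that the approximation step has been skipped rather than solved.
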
 

\begin{lemma}\label{lem: finiteD mdp current}
For any $m \geq 1$ and for any $0 \leq t_1 < t_2 < \ldots < t_m \leq T$, the sequence of random vectors $\{\bar{J}^n_{-1,0} (t_i)/a_n, 1 \leq i \leq m\}_{n \geq 1}$ satisfies the MDP with decay rate $a_n^2 / n$ and with rate function $\mathcal{J}^{\beta,m} \equiv \mathcal{J}^\beta_{\{t_i\}_{i=1}^m}$, where  for $\mathbf{r} =(r_1,\ldots, r_m)^T\in \mathbb{R}^m$,
\[\mathcal{J}^{\beta,m}  (\mathbf{r}) = \frac{1}{2} \mathbf{r}^T A^{-1} \mathbf{r}.\]
Here, $A = A_{\{t_i\}_{i=1}^m} = (a(t_i,t_j))_{1 \leq i,j \leq m}$ is the $m \times m$ matrix with $a(\cdot,\cdot)$ defined in \eqref{a t s}. 
\end{lemma}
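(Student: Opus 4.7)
The strategy is to apply the contraction principle in parallel to Proposition~\ref{prop MPD of the scaled density field} (MDP for $\mu^n$) and to Lemma~\ref{lem MPD of the fluctuation limit} (LDP for $(\sqrt n/a_n)\mathcal Y$), exploiting the key observation that both carry the \emph{same} rate function $\mathcal Q^\beta$. The starting point is the conservation identity $J^n_{-1,0}(t) = \sum_{x\geq 0}(\eta_x(t)-\eta_x(0))$ which, after centering and dividing by $a_n$, gives
\[
\frac{\bar J^n_{-1,0}(t)}{a_n} = \big[\mu^n_t(\mathbf 1_{[0,\infty)}) - \mu^n_0(\mathbf 1_{[0,\infty)})\big] - \frac{t\alpha n^{\gamma-\beta}\chi(\rho)}{a_n},
\]
with the bracket understood as a telescoping sum of a.s.\ finite support; the analogous Gaussian-limit identity is $B(t) = \mathcal Y_t(\mathbf 1_{[0,\infty)}) - \mathcal Y_0(\mathbf 1_{[0,\infty)})$ in a distributional sense, with no drift since $\mathcal Y$ is centered.

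By Lemma~\ref{lemma finite rate}, any $\mu$ with $\mathcal Q^\beta(\mu)<\infty$ already lies in $C([0,T],L^2(\R))$ and is parametrised by an $L^2$ datum $\varphi$ (plus, for $\beta\geq 1$, a forcing $G\in\mathbb H$); on this effective domain the functional $F(\mu) = (\mu_{t_i}(\mathbf 1_{[0,\infty)}) - \mu_0(\mathbf 1_{[0,\infty)}))_{i=1}^m$ is well-defined and continuous, because the improper integral reduces to a bounded linear form: for instance in the subcritical regime $\beta<1$ one has $F(\mu)_i = \langle\varphi, \mathbf 1_{[-\alpha(1-2\rho)t_i,0)}\rangle$, and analogously via the Gaussian Hilbert space structure in the critical and supercritical regimes. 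The same variational formula
\[
I^m(\mathbf r) = \inf\{\mathcal Q^\beta(\mu) : F(\mu) = \mathbf r\}
\]
then governs both the MDP of $F(\mu^n)$ and the LDP of $F((\sqrt n/a_n)\mathcal Y) = (\sqrt n/a_n) B$. Since $B = (B(t_i))_{i=1}^m$ is a centered Gaussian vector with covariance $A$, the latter LDP rate function is the explicit quadratic form $\mathcal J^{\beta,m}(\mathbf r) = \tfrac12 \mathbf r^T A^{-1}\mathbf r$, which identifies $I^m = \mathcal J^{\beta,m}$ without solving the variational problem by hand.

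It remains to translate the MDP for $F(\mu^n)$ into the MDP for $(\bar J^n_{-1,0}(t_i)/a_n)_i$ themselves. Microscopically $F(\mu^n)_i = J^n_{-1,0}(t_i)/a_n$ differs from $\bar J^n_{-1,0}(t_i)/a_n$ by the deterministic drift $t_i\alpha n^{\gamma-\beta}\chi(\rho)/a_n$, which diverges when $\beta<1$; this divergence reflects the fact that $F$ is discontinuous on the ambient space $D([0,T],\mathcal S^\prime(\R))$ (because $\mathbf 1_{[0,\infty)}\notin\mathcal S(\R)$, so the naive pairing has divergent mean). The main obstacle is precisely this reconciliation of the extended contraction principle on the effective domain with the divergent shift: one has to show that paths $\mu^n$ exhibiting a divergent pairing against $\mathbf 1_{[0,\infty)}$ are exponentially negligible at the MDP scale $a_n^2/n$, either by a direct truncation and summation-by-parts argument---writing $\mu^n_t(H) - \mu^n_0(H) = a_n^{-1}\sum_x J^n_{x,x+1}(t)[H(\tfrac{x+1}{n}) - H(\tfrac{x}{n})]$ for smooth $H$ approximating $\mathbf 1_{[0,\infty)}$---combined with the exponential tightness of Lemma~\ref{lem: exp tight current} and a quantitative decorrelation estimate for distant currents under $\nu_\rho$. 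This final step is the technical heart of the proof; once in hand, it yields the MDP for $(\bar J^n_{-1,0}(t_i)/a_n)_i$ at speed $a_n^2/n$ with rate function $\mathcal J^{\beta,m}$.
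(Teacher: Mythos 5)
Your plan is built on exactly the paper's key observation: the MDP rate function of $\mu^n$ and the LDP rate function of $\tfrac{\sqrt n}{a_n}\mathcal Y$ are both $\mathcal Q^\beta$, so the constrained infimum $\inf\{\mathcal Q^\beta(\mu):F(\mu)=\mathbf r\}$ can be identified, via a second application of the contraction principle to the Gaussian vector, with the quadratic form $\tfrac12\mathbf r^TA^{-1}\mathbf r$ instead of being solved by hand. So the route is the right one. However, the execution has a genuine gap at the step you yourself flag as the ``technical heart,'' and the way you propose to close it is not how it actually closes. The contraction principle cannot be applied to a functional $F$ that is only continuous on the effective domain $\{\mathcal Q^\beta<\infty\}$; you need either continuity on all of $D([0,T],\mathcal S'(\R))$ or an exponentially good approximation of $\bar J^n_{-1,0}(t)/a_n$ by continuous functionals of $\mu^n$. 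The paper constructs the latter explicitly: summation by parts against the tent function $G_l(u)=(1-u/l)\mathbf 1_{[0,l]}(u)$ gives the \emph{exact} identity \eqref{current decom}, in which the drift $t\alpha n^{\gamma-\beta}\chi(\rho)$ cancels between $J^n_{-1,0}$ and the block average $\tfrac{1}{nl}\sum_{x=0}^{nl-1}J^n_{x,x+1}$ --- it is not a divergent shift that must be argued away, and there is no event of ``divergent pairing'' to discard. What must then be proved are two superexponential estimates: that the centered block average is negligible as $l\to\infty$ (\eqref{super exp 1}), and that $G_l$ can be traded for a Schwartz function $\tilde G_l$ (\eqref{exp tight error}); your sketch gestures at these but supplies neither, and the second is precisely where the hypothesis $a_n\gg\sqrt{n\log n}$ enters.

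A second, independent gap is the upper bound. Even granting the replacement of the current by $\langle\mu^n_t-\mu^n_0,\tilde G_l\rangle$, the MDP upper bound leaves you with the variational problem constrained by $\int_0^{+\infty}[\mu(t_i,u)-\mu(0,u)]\,du=r_i$, and you must show this infimum is bounded \emph{below} by $\tfrac12\mathbf r^TA^{-1}\mathbf r$. This does not follow from your ``extended contraction'' claim; in the paper it requires computing the macroscopic current $\int_0^{+\infty}[\mu(t,u)-\varphi(u)]\,du$ explicitly for every finite-rate path (Lemma \ref{lemma 5.3}), verifying that $\langle\mu_{t_i}-\mu_0,\tilde G_l\rangle\to r_i$ and that the covariances $\Sigma(\tilde G_l)$ converge to $A$ (\eqref{cov converg}), and a density argument in $\varphi$ and $G$ for paths not of class $C_c^\infty$ (Lemma \ref{lemma contraction principle of current}). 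Your closed-form guess $F(\mu)_i=\langle\varphi,\mathbf 1_{[-\alpha(1-2\rho)t_i,0)}\rangle$ for $\beta<1$ is consistent with Lemma \ref{lemma 5.3}(i), but the analogues for $\beta\geq 1$ involve the forcing $G$ nontrivially and are not ``analogous via the Gaussian Hilbert space structure''; they must be computed. In short: correct strategy, correct identification of the obstacle, but the proof of the obstacle's resolution --- which is the actual content of the lemma --- is missing.
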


Now, we prove the sample path MDP for the current by using the above two lemmas.

\begin{proof}[Proof of Theorem \ref{thm mdp current tagged particle} for the current]
By Lemmas \ref{lem: exp tight current}, \ref{lem: finiteD mdp current} and \cite[Theorem 4.28]{Feng2006LDP}, the sequence of processes $\{\bar{J}^n_{-1,0} (t)/a_n, 0 \leq t \leq T\}_{n \geq 1}$ satisfies the MDP with decay rate $a_n^2/n$ and with rate function
\[\mathcal{J}^\beta (r(\cdot)):=\inf \Big\{\frac{1}{2} \mathbf{r}^T A^{-1} \mathbf{r}:  m \geq 1, 0 \leq t_1 < t_2 < \ldots < t_m \leq T, t_1,\ldots,t_m \in \Delta_c (r(\cdot))\Big\},\]
where $\Delta_c (r(\cdot))$ is the set of continuous points of the function $r: [0,T] \rightarrow \R$, $\mathbf{r} = (r(t_1),\ldots,r(t_m))^T \in \R^m$ and $A=  (a(t_i,t_j))_{1 \leq i,j \leq m}$. Moreover, since $a(t,s)$ is the covariance function of the Brownian motion if $\beta < 1$, and that of the fractional Brownian motion with Hurst parameter $1/4$ if $\beta > 1$, using \cite[Theorem 4.28]{Feng2006LDP} again, the last infimum equals a constant multiple the LDP rate function of the Brownian motion (respectively the fractional Brownian motion with Hurst parameter $1/4$ ) if $\beta < 1$ (respectively if $\beta > 1$). Precisely,
\[\mathcal{J}^\beta (r(\cdot)) = \begin{cases}
\frac{1}{2 \chi(\rho) \alpha |1-2\rho|}  \|\dot{r}\|_{L^2 ([0,T])}^2 \quad &\text{if}\; \beta < 1,\\
\frac{\sqrt{\pi}}{2  \sqrt{2} \chi(\rho)}  \|\dot{r}\|_{L^2 ([0,T])}^2 \quad &\text{if}\; \beta > 1.\\
\end{cases}\]
This concludes the proof.
\end{proof}

\subsection{Proof of Lemma \ref{lem: exp tight current}}\label{subsec: pf lemma 4.1} We follow the proof of  \cite[Lemma 5.2]{xue2024sample},  where the exponential tightness was proved for the current in the SSEP. The proof in the previous paper used the stirring representation for the SSEP, which does not hold any more for the WASEP.  Despite that, most of the proof follows \cite[Lemma 5.2]{xue2024sample} line by line. Thus, we only sketch it here and underline the main differences.

It suffices to prove the following two estimates:
\begin{itemize}
	\item \begin{equation}\label{exp tight c1}
		\lim_{M \rightarrow \infty} \limsup_{n \rightarrow \infty} \frac{n}{a_n^2} \log \Pbb_{\nu_\rho} \Big( \sup_{0 \leq t \leq T} |\bar{J}^n_{-1,0} (t) | > a_n M\Big) = -\infty;
	\end{equation}
\item for any $\varepsilon > 0$,
\begin{equation}\label{exp tight c2}
\lim_{\delta \rightarrow 0} \limsup_{n \rightarrow \infty} \sup_{\tau \in \mathcal{T}_T} \frac{n}{a_n^2} \log \Pbb_{\nu_\rho} \Big(\sup_{0 \leq t \leq \delta} |\bar{J}^n_{-1,0} (t+\tau) - \bar{J}^n_{-1,0} (\tau)| >a_n \varepsilon\Big) = - \infty,
\end{equation}
where $\mathcal{T}_T$ is the family of all stopping times bounded by $T$. 
\end{itemize}

For any integer $l > 0$, define
\begin{equation}\label{G l}
	G_l(u)=(1-\tfrac{u}{l})\mathbf{1}_{\{0\leq u\leq l\}}, \quad u\in \mathbb{R}.
\end{equation}
Since the number of particles is conserved,
\begin{equation*}
	\eta_x (t) - \eta_x (0) = J^n_{x-1,x} (t) - J^n_{x,x+1} (t), \quad x \in \Z.
\end{equation*}
Multiplying by $G_l (x/n)$ on both hand sides, then summing over $x \in \Z$ and using the summation by parts formula, we have
\begin{equation}\label{current density filed relation}
	\begin{aligned}
		\sum_{x} G_l (\tfrac{x}{n}) [\eta_x (t) - \eta_x (0)] &= \sum_{x} G_l (\tfrac{x}{n}) [J^n_{x-1,x} (t) - J^n_{x,x+1} (t)]\\
		&= \sum_{x} [G_l (\tfrac{x+1}{n}) - G_l (\tfrac{x}{n})] J^n_{x,x+1} (t) \\
		&= J^n_{-1,0} (t) - \frac{1}{nl} \sum_{x=0}^{nl-1} J^n_{x,x+1} (t).
	\end{aligned}
\end{equation}
Note that the expectation of both hand sides in the last equation is zero with respect to $\Pbb_{\nu_\rho}$. Thus,
\begin{equation}\label{current decom}
\bar{J}^n_{-1,0} (t) / a_n = \langle \mu^n_t,G_l\rangle -\langle \mu^n_0,G_l  \rangle + \frac{1}{na_nl} \sum_{x=0}^{nl-1} \bar{J}^n_{x,x+1} (t).
\end{equation}

By following the proof of \cite[Lemma 5.1]{xue2024sample}, one can show that for any $\varepsilon > 0$,
\begin{equation}\label{super exp 1}
\limsup_{l\rightarrow+\infty} \limsup_{n \rightarrow \infty} \frac{n}{a_n^2} \log \Pbb_{\nu_\rho} \Big( \sup_{0 \leq t \leq T} \Big| \frac{1}{na_n l} \sum_{x=0}^{nl-1} \bar{J}^n_{x,x+1} (t) \Big| > \varepsilon \Big) = - \infty.
\end{equation}
In particular, the third term on the right hand side of \eqref{current decom} is exponentially tight as $n \rightarrow \infty, l \rightarrow \infty$.

To conclude the proof, we only need to show that the estimates in \eqref{exp tight c1} and \eqref{exp tight c2} are true with $\bar{J}^n_{-1,0} (t)$ replaced by $\langle \mu^n_t,G_l\rangle$. The problem is that the function $G_l$ does not belong to $\mathcal{S} (\R)$.  Thus, we need to approximate it by Schwartz functions. It is in this step that we need the technical assumption $a_n \gg \sqrt{n \log n}$. Let $\tilde{G}_l \in  \mathcal{S} (\R)$ satisfying
\[\|\tilde{G}_l  - G_l\|_{L^2(\R)} \leq Cl^{-1}, \quad {\rm supp} (\tilde{G}_l) \subset [-2\ell,2\ell].\]
By Proposition \ref{prop MPD of the scaled density field}, for any $l$, $\langle \mu^n_t,\tilde{G}_l\rangle$ is exponentially tight. Thus, we only need to  prove that,  for any $\varepsilon > 0$, 
 \begin{equation}\label{exp tight error}
\limsup_{n \rightarrow \infty} \frac{n}{a_n^2} \log \Pbb_{\nu_\rho} \Big( \sup_{0 \leq t \leq T} |\langle \mu^n_t,F_l\rangle | >  \varepsilon \Big) = -\infty,
\end{equation}
where  $F_l := \tilde{G}_l - G_l$. 

Let $t_i = i T / n^3$ for $0 \leq i \leq n^3$. Then, we bound the probability in \eqref{exp tight error} by 
\[\Pbb_{\nu_\rho} \Big( \sup_{0 \leq i \leq n^3} |\langle \mu^n_{t_i},F_l\rangle | >  \varepsilon/2 \Big) + \Pbb_{\nu_\rho} \Big( \sup_{0 \leq t \leq T} |\langle \mu^n_t,F_l\rangle | - \sup_{0 \leq i \leq n^3} |\langle \mu^n_{t_i},F_l\rangle |  >  \varepsilon/2 \Big).\]
By using the assumption $a_n \gg \sqrt{n \log n}$, we have
\[\limsup_{n \rightarrow \infty} \frac{n}{a_n^2} \log \Pbb_{\nu_\rho} \Big( \sup_{0 \leq i \leq n^3} |\langle \mu^n_{t_i},F_l\rangle | >  \varepsilon/2 \Big)  = - \infty,\]
see \cite[Proof of the first term in (5.16)]{xue2024sample} for details.  

To deal with the second term above, we need to modify the proof in \cite{xue2024sample} slightly since we cannot use the stirring representation for the SSEP here. Since the process is stationary,
\begin{align*}
	&\Pbb_{\nu_\rho} \Big( \sup_{0 \leq t \leq T} |\langle \mu^n_t,F_l\rangle | - \sup_{0 \leq i \leq n^3} |\langle \mu^n_{t_i},F_l\rangle |  >  \varepsilon/2 \Big) \\
	\leq& \Pbb_{\nu_\rho} \Big( \sup_{0 \leq i \leq n^3-1} \,\sup_{t_i \leq t \leq t_{i+1}} |\langle \mu^n_t - \mu^n_{t_i},F_l\rangle |  >  \varepsilon/2 \Big)\\
	\leq& n^3 \Pbb_{\nu_\rho} \Big( \sup_{0 \leq t \leq Tn^{-3}} |\langle \mu^n_t - \mu^n_{0},F_l\rangle |  >  \varepsilon/2 \Big).
\end{align*}
By basic coupling (see \cite{liggettips} for example), we can assume that there is a particle at the origin initially. We label this particle by $Y_0$ and then label the other particles from the left to the right in an increasing order. Let $Y_i (t)$ be the position of the particle with label $i \in \Z$ at time $t$. Since particles cannot take over each other, $Y_i (t) < Y_{i+1} (t)$ for any $i \in \Z$ and any $t \geq 0$. We rewrite
\[\langle \mu^n_t - \mu^n_{0},F_l \rangle = \frac{1}{a_n} \sum_{i \in \Z} \Big\{F_l \big(\tfrac{Y_i(t)}{n}\big) - F_l \big(\tfrac{Y_i(0)}{n}\big)\Big\}.\]
Next, we consider the two cases $|i| > 3 n l$ and $|i| \leq 3 n l$ respectively. For the first case, let $\xi$ be a Poisson random variable with parameter $\alpha T n^{-1}$. Then, $\sup_{0 \leq t \leq Tn^{-3}}|Y_i (t) - Y_i (0)|$ is stochastically bounded by $\xi$.  Note that $F_l$ is supported in $[-2l,2l]$ and $|Y_i (0)| \geq 3 n l$ for $|i| > 3 n l$. Thus, $F_l (Y_i (0) / n) = 0$ for  $|i| > 3 n l$.  By standard large deviation estimates, 
\begin{align*}
	&\Pbb_{\nu_\rho} \Big( \sup_{0 \leq t \leq Tn^{-3}} \big|\frac{1}{a_n} \sum_{|i| > 3 n l} \Big\{F_l \big(\tfrac{Y_i(t)}{n}\big) - F_l \big(\tfrac{Y_i(0)}{n}\big)  \big|  >  \varepsilon/2 \Big)\\
	\leq& \sum_{|i| > 3 n l} \Pbb_{\nu_\rho} \Big(  \sup_{0 \leq t \leq Tn^{-3}} |Y_i (t) - Y_i (0)| > |i| - 2 n l\Big)\\
	\leq& \sum_{|i| > 3 n l} \Pbb_{\nu_\rho} (\xi > |i| - 2 n l) \leq C e^{-Cnl}
\end{align*}
for some constant $C > 0$. To deal with the sum over  $|i| \leq 3 n l$, we introduce the event
\[A_n = \Big\{  \sup_{|i| \leq 3 n l} \sup_{0 \leq t \leq Tn^{-3}} |Y_i (t) - Y_i (0)| \leq a_n^{3/2}/n^{1/2}\Big\}.\]
Then,
\[\Pbb_{\nu_\rho}  (A_n^c) \leq (3nl+1) \Pbb_{\nu_\rho} (\xi > a_n^{3/2}n^{-1/2}) \leq C n l e^{-Ca_n^{3/2}n^{-1/2}}.\]
We claim that for $n$ large enough, 
\[A_n \cap \Big\{   \sup_{0 \leq t \leq Tn^{-3}} \big|\frac{1}{a_n} \sum_{|i| \leq 3 n l} \Big\{F_l \big(\tfrac{Y_i(t)}{n}\big) - F_l \big(\tfrac{Y_i(0)}{n}\big)  \big|  >  \varepsilon/2 \Big\} = \emptyset.\]
Adding up the above estimates,  for $n$ large enough,
\[	\Pbb_{\nu_\rho} \Big( \sup_{0 \leq t \leq T} |\langle \mu^n_t,F_l\rangle | - \sup_{0 \leq i \leq n^3} |\langle \mu^n_{t_i},F_l\rangle |  >  \varepsilon/2 \Big) \leq C n^3 e^{-C n l} + C n^4 l e^{-Ca_n^{3/2}n^{-1/2}}.\]
Since $a_n \ll n$,
\[\lim_{n \rightarrow \infty} \frac{n}{a_n^2} \log \Pbb_{\nu_\rho} \Big( \sup_{0 \leq t \leq T} |\langle \mu^n_t,F_l\rangle | - \sup_{0 \leq i \leq n^3} |\langle \mu^n_{t_i},F_l\rangle |  >  \varepsilon/2 \Big) = - \infty.\]

It remains to prove the claim. For any $t$, let $B_{n,t}$ be the set of labels $i \in [-3nl,3nl]$ such that $Y_i (0) < 0, Y_i (t) \geq 0$ or  $Y_i (0) > 0, Y_i (t) \leq 0$.  Since the orderings of the particles are preserved by the dynamics, on the event $A_n$ we have $|B_{n,t}| \leq C a_n^{3/2} n^{-1/2}$ for any $0 \leq t \leq T n^{-3}$. Thus, on the event $A_n$,
\[\sup_{0 \leq t \leq Tn^{-3}} \big|\frac{1}{a_n} \sum_{|i| \leq 3 n l, i \in B_{n,t}} \Big\{F_l \big(\tfrac{Y_i(t)}{n}\big) - F_l \big(\tfrac{Y_i(0)}{n}\big)  \Big\} \big| \leq C(l) a_n^{1/2} n^{-1/2}.\]
Note that $F(l)$ is only discontinuous at the origin. For $i \notin B_{n,t}$, by the piecewise smoothness of $F_l$, on the event $A_n$,
\[\sup_{0 \leq t \leq Tn^{-3}} \big|\frac{1}{a_n} \sum_{|i| \leq 3 n l, i \notin B_{n,t}} \Big\{F_l \big(\tfrac{Y_i(t)}{n}\big) - F_l \big(\tfrac{Y_i(0)}{n}\big)  \Big\} \big| \leq C(l) a_n^{1/2} n^{-1/2}. \]
Thus, on the event $A_n$, 
\[\sup_{0 \leq t \leq Tn^{-3}} \big|\frac{1}{a_n} \sum_{|i| \leq 3 n l} \Big\{F_l \big(\tfrac{Y_i(t)}{n}\big) - F_l \big(\tfrac{Y_i(0)}{n}\big)  \Big\} \big| \leq C(l) a_n^{1/2} n^{-1/2},\]
which cannot be larger than $\varepsilon / 2$ for $n$ large enough since $a_n \ll n$. This proves the claim. 

\subsection{Proof of Lemma \ref{lem: finiteD mdp current}}\label{subsec:pf lemma 4.2} Intuitively, since 
\[\bar{J}^n_{-1,0} (t) / a_n = \frac{1}{a_n} \sum_{x=0}^{+\infty} [\eta_x (t) - \eta_x (0)] = \langle \mu^n_t - \mu^n_0, \chi_{[0,\infty)}\rangle,\]
by contraction principle, $\{\bar{J}^n_{-1,0} (t_i)/a_n, 1 \leq i \leq m\}$ should satisfy the MDP with decay rate $a_n^2/n$ and with rate function
\[\inf\left\{\mathcal{Q}_{t_m}^\beta(\mu):~ \langle \mu_{t_i} -\mu_{0},  \chi_{[0,\infty) } \rangle=r_i\text{~for all~}1\leq i\leq m\right\}.\]
The main problem is that the indicator function $\chi_{[0,\infty)}$ is not a test function.  Moreover, we also need to solve the above variational problem explicitly. It was calculated in \cite{xue2024sample} by using Fourier analysis.  Based on the contraction principle, we present a different approach to solve the above variational problem, which simplifies the proof in \cite{xue2024sample} significantly. 

\subsubsection{The lower bound} We need to prove that for any open set $\mathcal{O} \subset \R^m$,
\begin{equation*}
	\liminf_{n \rightarrow \infty} \frac{n}{a_n^2} \log \mathbb{P}_{\nu_{\rho}} \Big( \{\bar{J}^n_{-1,0} (t_i)/a_n, 1 \leq i \leq m\} \in \mathcal{O} \Big) \geq - \inf_{\mathbf{r} \in \mathcal{O}} \frac{1}{2} \mathbf{r}^T A^{-1} \mathbf{r}.
\end{equation*}
Using the relation between the current and the fluctuation field in \eqref{current decom} and the super-exponential estimates in \eqref{super exp 1} and \eqref{exp tight error}, for any $\mathbf{r} \in  \mathcal{O}$ and any  $\varepsilon > 0$ such that $B(\mathbf{r},\varepsilon) \subset \mathcal{O}$,
 \begin{align*}
 	\liminf_{n \rightarrow \infty} \frac{n}{a_n^2} \log &\mathbb{P}_{\nu_{\rho}} \Big( \{\bar{J}^n_{-1,0} (t_i)/a_n, 1 \leq i \leq m\} \in \mathcal{O} \Big) \\
 	&\geq - \liminf_{l\rightarrow+\infty} \inf \{\mathcal{Q}^\beta_{t_m} (\mu): (\langle \mu_{t_i} - \mu_{0}, \tilde{G}_l\rangle, 1 \leq i \leq m ) \in B(\mathbf{r},\varepsilon)\}\\
 	&\geq - \liminf_{l\rightarrow+\infty} \inf \{\mathcal{Q}^\beta_{t_m} (\mu): \langle \mu_{t_i} - \mu_{0}, \tilde{G}_l \rangle=r_i\text{~for all~}1\leq i\leq m \},
 \end{align*}
where $B(\mathbf{r},\varepsilon)$ is the ball of radius $\varepsilon$ centered at the point $\mathbf{r}$, and $\tilde{G}_l$ was introduced before \eqref{exp tight error}. We refer the readers to \cite[Proof of (6.2)]{xue2024sample} for details of the above argument.  

To calculate the above infimum, we have the following result.

\begin{lemma}\label{lemma 5.1}
Let $G\in \Scal(\R)$ . For any $m \geq 1$,  any $0 \leq t_1 < t_2 < \ldots < t_m \leq T$ and any $\mathbf{r}=(r_1,\ldots, r_m)^T\in \mathbb{R}^m$,
\[		\inf\left\{\mathcal{Q}^\beta_{t_m} (\mu):~ \langle \mu_{t_i} - \mu_{0}, G \rangle=r_i\text{~for all~}1\leq i\leq m\right\}
=\sup_{\bm{\xi} \in \R^m}\left\{\bm{\xi}^T \mathbf{r}-\frac{1}{2}\bm{\xi}^T \Sigma \bm{\xi}\right\},\]
	where $\Sigma := \Sigma_{\{t_k\}_{k=1}^m}(G)$ is a $m\times m$ symmetric matrix such that
	\[
	\Sigma (i,j)={\rm Cov}\left(\mathcal{Y}_{t_i}(G)-\mathcal{Y}_{0}(G), \mathcal{Y}_{t_j}(G)-\mathcal{Y}_{0}(G)\right)
	\]
	for any $1\leq i,j\leq m$. In particular, when $\Sigma$ is invertible, 
	\[
\inf\left\{\mathcal{Q}^\beta_{t_m} (\mu):~ \langle \mu_{t_i} - \mu_{0}, G \rangle=r_i\text{~for all~}1\leq i\leq m\right\}=\frac{1}{2} \mathbf{r}^T\Sigma^{-1} \mathbf{r} .
	\]
\end{lemma}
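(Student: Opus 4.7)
The plan is to apply the contraction principle to both the rescaled density fluctuation field $\mu^n$ and its rescaled Gaussian limit $\tfrac{\sqrt{n}}{a_n}\mathcal{Y}$. By Proposition \ref{prop MPD of the scaled density field} and Lemma \ref{lem MPD of the fluctuation limit}, these two sequences share the same rate function $\mathcal{Q}^\beta_{t_m}$, so pushing them forward through the \emph{same} evaluation functional will yield two expressions for the same LDP rate function: one is precisely the infimum on the left-hand side, the other is the explicit Gaussian Legendre transform on the right-hand side. This conceptually replaces the Fourier computation of \cite{xue2024sample}.

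Concretely, fix $G \in \mathcal{S}(\mathbb{R})$ and $0 \leq t_1 < \cdots < t_m \leq T$, and define
\[\Phi: D([0, t_m], \mathcal{S}'(\mathbb{R})) \to \mathbb{R}^m, \qquad \Phi(\mu) = (\langle \mu_{t_i} - \mu_0, G \rangle)_{i=1}^m.\]
By Lemma \ref{lemma finite rate}(2), every $\mu$ with $\mathcal{Q}^\beta_{t_m}(\mu) < +\infty$ belongs to $C([0, t_m], L^2(\mathbb{R}))$, and on this subspace point evaluations at $t_i$ are continuous in the Skorohod topology; hence $\Phi$ is continuous on the level sets of $\mathcal{Q}^\beta_{t_m}$, which is enough for the contraction principle. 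Applying contraction to $\mu^n$ gives an MDP for $\Phi(\mu^n)$ at decay rate $a_n^2/n$ with rate function $I(\mathbf{r}) = \inf\{\mathcal{Q}^\beta_{t_m}(\mu) : \Phi(\mu) = \mathbf{r}\}$, which is the left-hand side of the lemma. Applying contraction to $\tfrac{\sqrt{n}}{a_n}\mathcal{Y}$ gives the LDP for $\Phi\bigl(\tfrac{\sqrt{n}}{a_n}\mathcal{Y}\bigr) = \tfrac{\sqrt{n}}{a_n}(\mathcal{Y}_{t_i}(G) - \mathcal{Y}_0(G))_{i=1}^m$ at the same decay rate and with the same rate function $I$.

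Finally, I will compute this rate function directly from the Gaussian structure. Since $(\mathcal{Y}_{t_i}(G) - \mathcal{Y}_0(G))_{i=1}^m$ is a centered Gaussian vector with covariance matrix $\Sigma$, an elementary calculation shows that the scaled logarithmic moment generating function of $\tfrac{\sqrt{n}}{a_n}(\mathcal{Y}_{t_i}(G) - \mathcal{Y}_0(G))_{i=1}^m$ converges to $\tfrac{1}{2}\bm{\xi}^T \Sigma \bm{\xi}$, so the G\"artner--Ellis theorem provides the LDP with rate function $J(\mathbf{r}) = \sup_{\bm{\xi} \in \mathbb{R}^m}\{\bm{\xi}^T \mathbf{r} - \tfrac{1}{2}\bm{\xi}^T \Sigma \bm{\xi}\}$. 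By uniqueness of the LDP rate function, $I = J$, which is the first identity; when $\Sigma$ is invertible, the supremum is attained at $\bm{\xi} = \Sigma^{-1}\mathbf{r}$, yielding $\tfrac{1}{2}\mathbf{r}^T \Sigma^{-1}\mathbf{r}$. The only delicate step is the continuity of $\Phi$ on the level sets of $\mathcal{Q}^\beta_{t_m}$, which is where Lemma \ref{lemma finite rate} is essential; once that regularity is in hand, the rest is a clean application of the contraction principle together with the standard Gaussian LDP.
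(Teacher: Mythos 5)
Your proposal is correct and follows essentially the same route as the paper: apply the contraction principle to the LDP of $\tfrac{\sqrt{n}}{a_n}\mathcal{Y}$ from Lemma \ref{lem MPD of the fluctuation limit} through the evaluation map to obtain the infimum as a rate function, compute the pushed-forward Gaussian rate function directly as the Legendre transform of $\tfrac{1}{2}\bm{\xi}^T\Sigma\bm{\xi}$, and conclude by uniqueness of rate functions. The only difference is that your additional contraction step for $\mu^n$ is not needed for this lemma (it concerns only the limiting Gaussian field), and you supply the continuity discussion for $\Phi$ that the paper leaves implicit.
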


\begin{proof}
	Since $\{\mathcal{Y}_t\}_{t\geq 0}$ is Gaussian,   the vector $\{\frac{\sqrt{n}}{a_n}\mathcal{Y}_{t_i}(G)-\frac{\sqrt{n}}{a_n}\mathcal{Y}_{0}(G):~1\leq i\leq m\}_{n\geq 1}$ satisfies the large deviation principles  with rate function 
	\[\sup_{\bm{\xi}\in \R^m}\left\{\bm{\xi}^T \mathbf{r}-\frac{1}{2} \bm{\xi}^T \Sigma \bm{\xi} \right\},\]
	see \cite{deuschel1989large} for example. Then, by Lemma \ref{lem MPD of the fluctuation limit} and the contraction principle, the first identity in Lemma \ref{lemma 5.1} holds. When $\Sigma$ is invertible, the second identity in Lemma \ref{lemma 5.1} follows directly from 
	Cauchy-Schwarz inequality. 
\end{proof}

By direct calculations, it is straightforward to prove that  for any $s,t > 0$,
\begin{equation}\label{cov converg}
\lim_{l\rightarrow+\infty}{\rm Cov}\left(\mathcal{Y}_t(\tilde{G}_l)-\mathcal{Y}_0(\tilde{G}_l), \mathcal{Y}_s(\tilde{G}_l)-\mathcal{Y}_0(\tilde{G}_l)\right) = a(t,s),
\end{equation}
where $a(\cdot,\cdot)$ was defined in \eqref{a t s}. Then, by Lemma \ref{lemma 5.1}, 
\begin{align*}
	&\liminf_{n \rightarrow \infty} \frac{n}{a_n^2} \log \mathbb{P}_{\nu_{\rho}} \Big( \{\bar{J}^n_{-1,0} (t_i)/a_n, 1 \leq i \leq m\} \in \mathcal{O} \Big) \\
	&\geq - \liminf_{l\rightarrow+\infty}  \frac{1}{2} \mathbf{r}^T \big(\Sigma_{\{t_k\}_{k=1}^m}(\tilde{G}_l)\big)^{-1} \mathbf{r} \\
	&=  - \frac{1}{2} \mathbf{r}^T A^{-1} \mathbf{r},
\end{align*}
where the matrix $A$ was defined in Lemma \ref{lem: finiteD mdp current}.  We conclude the proof of the lower bound by optimizing over $\mathbf{r} \in \mathcal{O}$. 

\subsubsection{The upper bound} By exponential tightness of the current, we only need to prove the upper bound for any compact set $\mathcal{K} \subset \R^m$. Following \cite[Proof of (6.1)]{xue2024sample}  line by line, one can show that, for any $\varepsilon > 0$,
\begin{multline*}
	\limsup_{n \rightarrow \infty} \frac{n}{a_n^2} \log \mathbb{P}_{\nu_{\rho}} \Big( \{\bar{J}^n_{-1,0} (t_i)/a_n, 1 \leq i \leq m\} \in \mathcal{K} \Big)\\
	\leq - \inf_{\mathbf{r} \in \mathcal{K}} \inf_{\mu}  \left\{\mathcal{Q}^\beta_{t_m} (\mu):~\int_0^{+\infty}[\mu(t_i,u)-\mu(0, u)]du=r_i\text{~for all~}1\leq i\leq m\right\}.
\end{multline*}

The following result bounds from below the first infimum in the last inequality, from which the upper bound follows immediately.

\begin{lemma}\label{lemma contraction principle of current}
For any $\beta\geq 0$ and any $\mathbf{r}=(r_1,\ldots, r_m)^T\in \mathbb{R}^m$, 
\[
\inf\left\{\mathcal{Q}^\beta_{t_m} (\mu):~\int_0^{+\infty} [\mu(t_i,u)-\mu(0, u)] du=r_i\text{~for all~}1\leq i\leq m\right\}\geq \frac{1}{2} \mathbf{r}^T A^{-1} \mathbf{r},
\]
where the matrix $A$ was defined in Lemma \ref{lem: finiteD mdp current}.
\end{lemma}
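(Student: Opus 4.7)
The plan is to reduce the indicator-function constraint to a sequence of Schwartz-function constraints, apply Lemma \ref{lemma 5.1}, and then pass to the limit using the covariance convergence \eqref{cov converg}. The supremum (Legendre-transform) formulation of the Gaussian rate function produced by Lemma \ref{lemma 5.1} is what makes the limit step essentially painless, since it bypasses any issues with invertibility of the finite-dimensional covariance matrices along the approximating sequence.

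Concretely, we would take $\mu\in D([0,T],\Scal^\prime(\R))$ in the feasible set with $\mathcal{Q}^\beta_{t_m}(\mu)<\infty$ (else the bound is vacuous), so that $\mu\in C([0,T],L^2(\R))$ by Lemma \ref{lemma finite rate}. Let $\tilde G_\ell\in\Scal(\R)$ be the Schwartz approximants of the tent function $G_\ell$ from \eqref{G l} that were introduced before \eqref{exp tight error}, and set $r_i^{(\ell)} := \langle \mu_{t_i}-\mu_0,\tilde G_\ell\rangle$. Since $G_\ell(u)\to \chi_{[0,\infty)}(u)$ pointwise with $|G_\ell|\leq 1$, dominated convergence applied to the integrability already implicit in the constraint $\int_0^{+\infty}[\mu(t_i,u)-\mu(0,u)]\,du=r_i$ gives $\langle \mu_{t_i}-\mu_0, G_\ell\rangle\to r_i$; together with the $L^2$-closeness $\|\tilde G_\ell - G_\ell\|_{L^2}\leq C\ell^{-1}$ and $\mu_{t_i}-\mu_0\in L^2(\R)$, this yields $\mathbf{r}^{(\ell)}\to \mathbf{r}$ as $\ell\to\infty$.

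Because $\mu$ lies in the feasible set $\{\nu : \langle \nu_{t_i}-\nu_0,\tilde G_\ell\rangle = r_i^{(\ell)},\; 1\leq i\leq m\}$ for every $\ell$, Lemma \ref{lemma 5.1} applied to the Schwartz function $\tilde G_\ell$ gives
\[
\mathcal{Q}^\beta_{t_m}(\mu) \;\geq\; \sup_{\bm{\xi}\in\R^m}\Big\{\bm{\xi}^T \mathbf{r}^{(\ell)} - \tfrac{1}{2}\bm{\xi}^T \Sigma_{\{t_k\}_{k=1}^m}(\tilde G_\ell) \bm{\xi}\Big\}.
\]
For any fixed $\bm{\xi}\in\R^m$, \eqref{cov converg} and the convergence $\mathbf{r}^{(\ell)}\to \mathbf{r}$ give $\mathcal{Q}^\beta_{t_m}(\mu) \geq \bm{\xi}^T \mathbf{r} - \tfrac{1}{2}\bm{\xi}^T A\bm{\xi}$. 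Optimizing over $\bm{\xi}$ and using that $A$ is invertible (it is the covariance matrix of Brownian motion when $\beta<1$ and of fractional Brownian motion of Hurst index $1/4$ when $\beta>1$, both of which are non-degenerate at distinct times), we conclude
\[
\mathcal{Q}^\beta_{t_m}(\mu) \;\geq\; \sup_{\bm{\xi}\in\R^m}\Big\{\bm{\xi}^T \mathbf{r} - \tfrac{1}{2}\bm{\xi}^T A\bm{\xi}\Big\} \;=\; \tfrac{1}{2}\mathbf{r}^T A^{-1}\mathbf{r},
\]
which is the claimed bound; optimizing over $\mu$ in the infimum then yields Lemma \ref{lemma contraction principle of current}.

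The principal obstacle will be making $r_i^{(\ell)}\to r_i$ rigorous: since $\chi_{[0,\infty)}\notin L^2(\R)$, one cannot merely invoke $L^2$-continuity of $\mu_t$, and some genuine integrability (either built into the constraint itself, or obtained from the explicit weak-PDE representation of $\mu$ in Lemma \ref{lemma finite rate} via an integration by parts that exchanges $\chi_{[0,\infty)}$ for a boundary trace at $0$) must be exploited. A separate minor verification is the invertibility of $A$ in the critical case $\beta=1$, where the formula for $a(t,s)$ involves the auxiliary function $f(t)$; this should follow from the general fact that the limiting Gaussian process is non-degenerate at distinct positive times, but it is the one spot where the uniform treatment over $\beta$ needs a small separate argument.
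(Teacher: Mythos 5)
Your proposal follows the same skeleton as the paper's proof — test $\mu_{t_i}-\mu_0$ against the Schwartz approximants $\tilde G_\ell$, invoke Lemma \ref{lemma 5.1}, and pass to the limit via \eqref{cov converg} — but it is organized differently in two respects, one of which is an improvement and one of which leaves a hole. The improvement: by keeping the Legendre (supremum) form of the Gaussian rate function and sending $\ell\to\infty$ at fixed $\bm{\xi}$, you avoid inverting $\Sigma_{\{t_k\}}(\tilde G_\ell)$ along the approximating sequence, and you also avoid the paper's two-step structure (the paper first proves the bound for $\mu$ with $\varphi\in C_c^\infty(\R)$ and $G\in C_c^{1,+\infty}([0,T]\times\R)$, where compact support lets it verify $r_i^\ell\to r_i$ through the explicit solution formula of Lemma \ref{lemma 5.3}, and then extends to general $\mu$ by the density argument \eqref{equ limit in proof of Lemma 5.2}). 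The hole is exactly the step you flag: dominated convergence does not apply as stated, because the constraint $\int_0^{+\infty}[\mu(t_i,u)-\mu(0,u)]\,du=r_i$ is an improper integral (see how Lemma \ref{lemma 5.3} computes it as $\lim_{M\to\infty}\int_0^M$) and supplies no $L^1$ dominating function on $[0,\infty)$. The repair, however, is elementary and does not require the PDE representation: writing $h_i=\mu(t_i,\cdot)-\mu(0,\cdot)$ and $H_i(M)=\int_0^M h_i(u)\,du$, the tent shape of $G_\ell$ from \eqref{G l} gives
\begin{equation*}
\langle h_i, G_\ell\rangle=\int_0^\ell\bigl(1-\tfrac{u}{\ell}\bigr)h_i(u)\,du=\frac{1}{\ell}\int_0^\ell H_i(M)\,dM\longrightarrow r_i
\end{equation*}
by Ces\`aro averaging, since $H_i(M)\to r_i$; combined with $\|\tilde G_\ell-G_\ell\|_{L^2}\leq C\ell^{-1}$ and $\mu_{t_i}-\mu_0\in L^2(\R)$ this yields $\mathbf{r}^{(\ell)}\to\mathbf{r}$ for arbitrary feasible $\mu$ of finite rate. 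With that substitution your argument closes and is, if anything, shorter than the paper's. Your remaining caveat about invertibility of $A$ is legitimate but shared with the paper: for $\beta<1$ one needs $\rho\neq 1/2$ (and $t_1>0$) for $A^{-1}$ to exist, which is the standing hypothesis of Theorem \ref{thm mdp current tagged particle}.
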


In order to prove Lemma \ref{lemma contraction principle of current}, we first calculate the macroscopic current across the origin.

\begin{lemma}\label{lemma 5.3}
	Assume $\mu\in D ([0, T], \Scal^\prime(\R))$ satisfies that $\mathcal{Q}_{T}^\beta (\mu)<+\infty$.   Let  $G = G(\mu)$ and $\varphi = \varphi (\mu)$ be identified  in Lemma \ref{lemma finite rate}.
	\begin{enumerate}[(i)]
		\item For $\beta<1$,
		\[
		\int_0^{+\infty}[\mu(t,u)-\varphi (u)]du=\int_{-\alpha(1-2\rho)t}^0 \varphi (u) du, \quad 0\leq t\leq T.
		\]
		\item For $\beta>1$,
		\begin{align*}
			\int_0^{+\infty}&[\mu(t,u)-\varphi (u)]du\\
			&=\int_{\R}\varphi (u) V_t(u)du-\int_0^t\left(\int_{\R}\partial_{uu}^2G (s,u)\mathbb{P}(B_{t-s}+u\geq 0)du\right)ds, \quad 0 \leq t \leq T,
		\end{align*}
		where $\{B_t\}_{t\geq 0}$ is the one dimensional standard Brownian motion starting from the origin and 
		\[
		V_t(u)=\mathbb{P}(B_t+u\geq 0)-\chi_{\{u\geq 0\}}=
		\begin{cases}
			\mathbb{P}(B_t\geq |u|) & \text{~if~}u<0,\\
			-\mathbb{P}(B_t\geq |u|) & \text{~if~}u\geq 0. 
		\end{cases}
		\]
		\item For $\beta=1$,
		\begin{align*}
			&\int_0^{+\infty} [\mu(t,u)-\varphi (u)] du=\int_{\R}\varphi (u)R_t(u)du\\
			&-\int_0^t\left(\int_{\R}\partial_{uu}^2 G (s,u)\mathbb{P}(B_{t-s}+u+\alpha(t-s)(1-2\rho)\geq 0)du\right)ds, \quad 0 \leq t \leq T,
		\end{align*}
		where 
		\[
		R_t(u)=\mathbb{P}(B_t\geq -u-\alpha(1-2\rho)t)-\chi_{\{u\geq 0\}}=
		\begin{cases}
			\mathbb{P}(B_t\geq -u-\alpha(1-2\rho)t) & \text{~if~}u<0,\\
			-\mathbb{P}(B_t\geq u+\alpha(1-2\rho)t) & \text{~if~}u\geq 0. 
		\end{cases}
		\] 
	\end{enumerate}
\end{lemma}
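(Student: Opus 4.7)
The plan is to combine the Duhamel/representation formulas for $\mu$ provided by Lemma \ref{lemma finite rate} with a Fubini computation that converts the spatial integral $\int_0^{+\infty}\cdot\,du$ into a Brownian tail probability. The three identities (i)--(iii) correspond exactly to the three regimes identified in Lemma \ref{lemma finite rate}, and the structure of the proof is parallel in all three cases.

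For $\beta<1$, Lemma \ref{lemma finite rate} gives $\mu(t,u)=\varphi(u-\alpha(1-2\rho)t)$. The identity (i) then follows immediately from the change of variable $v=u-\alpha(1-2\rho)t$ in $\int_0^{+\infty}\varphi(u-\alpha(1-2\rho)t)\,du$ and subtracting $\int_0^{+\infty}\varphi(v)\,dv$.

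For $\beta>1$, Lemma \ref{lemma finite rate} represents $\mu$ as the unique weak $L^2$-solution of $\partial_t\mu=\tfrac12\Delta\mu-\Delta G$ with initial datum $\varphi$. Let $P_t f(u)=\E[f(u+B_t)]$ denote the heat semigroup. Duhamel's formula gives $\mu(t,\cdot)=P_t\varphi-\int_0^t P_{t-s}(\Delta G(s,\cdot))\,ds$. Pairing with $\chi_{[0,+\infty)}$ and swapping integrals via Fubini,
\[
\int_0^{+\infty} P_t\varphi(u)\,du=\int_{\R}\varphi(v)\,\Pbb(B_t+v\geq 0)\,dv,
\]
and analogously for the Duhamel term. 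Subtracting $\int_0^{+\infty}\varphi(u)\,du=\int_{\R}\varphi(u)\chi_{\{u\geq 0\}}\,du$ produces precisely the kernel $V_t(u)=\Pbb(B_t+u\geq 0)-\chi_{\{u\geq 0\}}$, yielding (ii). The case $\beta=1$ is identical after replacing $P_t$ by the drifted semigroup $\tilde P_t f(u)=\E[f(u-\alpha(1-2\rho)t+B_t)]$ associated to $\mathcal{P}^{*}=\tfrac12\Delta-\alpha(1-2\rho)\nabla$; the analogous identity $\int_0^{+\infty}\tilde P_t f(u)\,du=\int_{\R}f(v)\,\Pbb(B_t\geq -v-\alpha(1-2\rho)t)\,dv$ produces the kernel $R_t$ in (iii).

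The main obstacle is that $\chi_{[0,+\infty)}$ is neither smooth nor compactly supported, so the weak formulation of the PDE does not apply to it directly. I would handle this by truncation: approximate $\chi_{[0,+\infty)}$ by a sequence of Schwartz cutoffs $\psi_R$, test the Duhamel identity against $\psi_R$ (for which Fubini is immediate by rapid decay), and pass to the limit $R\to+\infty$ by dominated convergence. The necessary $L^2$ integrability is supplied by $\mathcal{Q}^\beta_T(\mu)<+\infty$, which via Lemma \ref{lemma finite rate} forces $\varphi\in L^2(\R)$ and $\nabla G\in L^2([0,T]\times\R)$; together with the Gaussian tail decay of $V_t$, $R_t$ and of $u\mapsto\Pbb(B_{t-s}+u\geq 0)$, this legitimizes every interchange of integrals.
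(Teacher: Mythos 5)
Your proposal is correct and follows essentially the same route as the paper: in all three regimes one invokes the representation of $\mu$ from Lemma \ref{lemma finite rate} (the translation formula for $\beta<1$, the Duhamel formula for $\beta\geq 1$), pairs it with $\chi_{[0,+\infty)}$ via a truncation ($\int_0^M$ in the paper, Schwartz cutoffs in your version), and interchanges integrals by Fubini using the Gaussian kernel, which produces the kernels $V_t$ and $R_t$ exactly as you describe. The justification of the interchanges via $\varphi\in L^2(\R)$, the compact support/integrability of $G$, and Gaussian tail decay likewise matches the paper's argument.
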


\begin{remark}
	Note that the integral in (ii) converges since
	\begin{align*}
		\int_{\R}\partial_{uu}^2G (s,u)\mathbb{P}(B_{t-s}+u\geq 0)du&=-\int_{\R}\partial_{u}G (s,u)\partial_u\mathbb{P}(B_{t-s}+u\geq 0)du\\
		&=-\int_{\R}\partial_{u}G (s,u)\frac{1}{\sqrt{2\pi(t-s)}}e^{-\frac{u^2}{2(t-s)}}du.
	\end{align*}
	Similarly, the integral in (iii) also converges. 
\end{remark}

	\begin{proof}
		
		We only deal with the  case $\beta = 1$ and the remaining two cases follow from the same analysis. Since $\mathcal{Q}^\beta_T (\mu) < + \infty$, by Lemma \ref{lemma finite rate},
		\[
		\partial_t\mu(t,u)=\frac{1}{2}\Delta\mu(t, u)-\alpha(1-2\rho)\nabla\mu(t, u)-\Delta G (t,u).
		\]
		Thus, for $0\leq t\leq T$,
		\[
		\mu(t,u)=S_{-t}T_t \varphi (u)-\int_0^tS_{-(t-s)}T_{t-s}\Delta G (s,u)ds,
		\]
		where $\{T_t\}_{t\geq 0}$ is the semigroup associated with the Laplacian $(1/2) \Delta$ and $\{S_s\}_{s \in \R}$ is the translation operator: $S_s h (u) = h (u+\alpha(1-2\rho)s)$. Then, 
		\[
		\int_0^{+\infty} [\mu(t,u)- \varphi (u)]du={\rm \uppercase\expandafter{\romannumeral1}}-{\rm \uppercase\expandafter{\romannumeral2}},
		\]
		where
		\begin{align*}
			{\rm \uppercase\expandafter{\romannumeral1}} &=\int_0^{+\infty}S_{-t}T_t\varphi (u)-\varphi (u)du,\\
			{\rm \uppercase\expandafter{\romannumeral2}} &=\int_0^{+\infty}\left(\int_0^tS_{-(t-s)}T_{t-s}\Delta G(s,u)ds\right)du.
		\end{align*}
		By direct calculations,
		\begin{align}\label{equ appendix C1}
			{\rm \uppercase\expandafter{\romannumeral1}}&=\lim_{M\rightarrow+\infty}\int_0^M\mathbb{E} [\varphi (B_t-\alpha(1-2\rho)t+u)] -\varphi (u)du\notag\\
			&=\lim_{M\rightarrow+\infty}\int_0^M\left(\int_{-\infty}^{+\infty}\frac{\varphi (v)}{\sqrt{2\pi t}}e^{-\frac{(v-u+\alpha(1-2\rho)t)^2}{2t}}dv\right)-\varphi (u)du \notag\\
			&=\lim_{M\rightarrow+\infty}\int_{-\infty}^{+\infty}\varphi (u)\left(\int_0^M\frac{1}{\sqrt{2\pi t}}e^{-\frac{(v-u-\alpha(1-2\rho)t)^2}{2t}}dv-\chi_{\{0\leq u\leq M\}}\right)du \\
			&=\lim_{M\rightarrow+\infty}\int_{-\infty}^{+\infty}\varphi (u) \left(\mathbb{P}(0\leq B_t+u+\alpha(1-2\rho)t\leq M)-\chi_{\{0\leq u\leq M\}}\right)du \notag\\
			&=\int_{-\infty}^{+\infty}\varphi (u)R_t(u)du \notag,
		\end{align}
		and 
		\begin{align}\label{equ appendix C2}
			{\rm \uppercase\expandafter{\romannumeral2}}&=\int_0^{+\infty}\left(\int_0^t\mathbb{E} [\Delta G (s, B_{t-s}+u-\alpha(1-2\rho)(t-s)) ] ds\right)du
			\notag\\
			&=\int_0^{+\infty}\left(\int_0^t\left(\int_{-\infty}^{+\infty}\frac{\Delta G (s,v)}{\sqrt{2\pi(t-s)}}e^{-\frac{(v-u+\alpha(1-2\rho)(t-s))^2}{2(t-s)}}dv\right)ds\right)du \\
			&=\int_0^t\left(\int_{-\infty}^{+\infty}\Delta G (s,u)\left(\int_0^{+\infty}\frac{1}{\sqrt{2\pi(t-s)}}e^{-\frac{(v-u-\alpha(1-2\rho)(t-s))^2}{2(t-s)}}dv\right)du\right)ds\notag\\
			&=\int_0^t\left(\int_{-\infty}^{+\infty}\Delta G (s,u)\mathbb{P}\left(B_{t-s}+u+\alpha(1-2\rho)(t-s)\geq 0\right)du\right)ds. \notag
		\end{align}
		This concludes the proof.
	\end{proof}

Now, we are ready to prove Lemma \ref{lemma contraction principle of current}. 

\begin{proof}[Proof of Lemma \ref{lemma contraction principle of current}]
	We only deal with the  case $\beta = 1$ since the remaining cases follow from the same analysis.   It suffices to show that, if  $\mathcal{Q}^\beta_{t_m}(\mu)<+\infty$ and $\int_0^{+\infty} [\mu(t_i,u)-\mu(0, u) ] du=r_i$ for all $1\leq i\leq m$, then 
	\[
	\mathcal{Q}^\beta_{t_m} (\mu)\geq \frac{1}{2} \mathbf{r}^T A^{-1} \mathbf{r}.
	\]
	
	We first prove the above inequality for $\mu$ such that
	\begin{equation}\label{equ condition 5.1}
		\varphi \in C^\infty_c(\R) \text{~and~} G \in C_c^{1,+\infty}([0, T]\times \R),
	\end{equation}
where $\varphi$ and $G$ were identified in Lemma \ref{lemma finite rate}. Let $\tilde{G}_l\in \Scal(\R)$ be the approximation of $G_l$ introduced before \eqref{exp tight error}. By Lemma \ref{lemma 5.1},
	\[
	\mathcal{Q}^\beta_{t_m} (\mu)\geq \frac{1}{2} (\mathbf{r}^l)^T\left(\Sigma_{\{t_k\}_{k=1}^m}(\tilde{G}_l)\right)^{-1}\mathbf{r}^l,
	\]
	where $\mathbf{r}^l=(r_1^l,\ldots, r_m^l)^T \in \R^m$ with $r_i^l=\int_{\R}(\mu(t_i, u)-\varphi (u))\tilde{G}_l(u)du$ for all $1\leq i\leq m$.
	We claim that
	\begin{equation}\label{equ claim proof of Lemma 5.2}
		\lim_{l\rightarrow+\infty} r_i^l= r_i.
	\end{equation}
Then, by \eqref{cov converg},
	\[
	\mathcal{Q}_{t_m} (\mu)\geq \limsup_{l\rightarrow+\infty} \frac{(\mathbf{r}^l)^T\left(\Sigma_{\{t_k\}_{k=1}^m}(\tilde{G}_l)\right)^{-1}\mathbf{r}^l}{2} = \frac{1}{2} \mathbf{r}^T A^{-1} \mathbf{r}.
	\]

	Now we prove \eqref{equ claim proof of Lemma 5.2}.  
	As in the proof of Lemma \ref{lemma 5.3}, for any $H\in L^2(\R)$, 
	\begin{align}\label{equ 5.3}
		\int_{\R}\left(\mu(t,u)-\varphi (u)\right)H(u)du=&\int_{\R}\varphi (u)\left(\mathbb{E}[H(B_t+u+\alpha(1-2\rho)t)]-H(u)\right)du \\
		&-\int_0^t\left(\int_{\R}\partial_{uu}^2G (s,u)\mathbb{E}[H\left(B_{t-s}+\alpha(1-2\rho)(t-s)+u\right)] du\right)ds\notag. 
	\end{align}
	By Equation \eqref{equ 5.3}, Cauchy-Schwarz inequality and the fact that $G \in C_c^{1,+\infty}([0, T]\times \R)$, 
	\[
	\lim_{l\rightarrow+\infty}\int_{\R}(\mu(t, u)- \varphi (u))(\hat{G}_l(u)-G_l(u))du=0. 
	\]
	Hence, to check Equation \eqref{equ claim proof of Lemma 5.2}, we only need to show that
	\begin{equation}\label{equ 5.4}
		\lim_{l\rightarrow+\infty}\int_{\R}(\mu(t, u)-\varphi (u)) G_l (u)du=\int_0^{+\infty}\mu(t, u)-\varphi (u) du.
	\end{equation}
	By Lemma \ref{lemma 5.3} and Equation \eqref{equ 5.3},
	\begin{align}\label{equ 5.5}
		&\int_{\R}(\mu(t, u)-\mu(0,u))G_l(u)du-\int_0^{+\infty}\mu(t, u)-\mu(0,u)du \notag\\
		&=\int_{\R} \varphi (u) F_l(u)du-\int_0^t\left(\int_{\R}\partial_{uu}^2 G (s,u)Q_l(s,u)du\right)ds,
	\end{align}
	where
	\begin{align*}
		F_l(u) &=\mathbb{E}[G_l(B_t+u+\alpha(1-2\rho)t)]-G_l(u)-R_t(u),\\
		Q_l(s,u)&=\mathbb{E}[G_l\left(B_{t-s}+\alpha(1-2\rho)(t-s)+u\right)]-\mathbb{P}(B_{t-s}+u+\alpha(t-s)(1-2\rho)\geq 0).
	\end{align*}
	According to the definition of $G_l$, we have
	\begin{align*}
		|Q_l(s,u)|&\leq \frac{\mathbb{E}|B_{t-s}+u+\alpha(1-2\rho)(t-s)|}{l}+\mathbb{P}\left(B_{t-s}+u+\alpha(t-s)(1-2\rho)\geq l\right).  
	\end{align*}
	Therefore, $\lim_{l\rightarrow+\infty}Q_l (s,u)=0$ uniformly on any compact set in $ [0, t]\times \R$. Similarly, $\lim_{l\rightarrow+\infty}F_l=0$ uniformly on any compact set in $\R$. Then, Equation \eqref{equ 5.4} holds according to Equation \eqref{equ 5.5} and the fact that $\varphi$ and $G$ have compact support. 
	
	For $\mu$ not satisfying \eqref{equ condition 5.1}, the idea is to approximate $\varphi (\mu)$ and $G (\mu)$ by $\varphi^\varepsilon \in C^\infty_c (\R)$ and $G^{\varepsilon} \in C_c^{1,\infty} ([0,T] \times \R)$ respectively.  Precisely speaking, let  $\varphi^\varepsilon  \rightarrow \varphi$ in $L^2(\R)$ and $G^{\varepsilon}  \rightarrow G$ in $\mathbb{H}$ as $\varepsilon \rightarrow 0$.  Let $\mu^\varepsilon \in D ([0, t_m], \Scal^\prime(\R))$ be given in Lemma \ref{lemma finite rate} associated with $\varphi^\varepsilon $ and $G^{\varepsilon}$. 
	Then, by Lemma \ref{lemma finite rate},
	\begin{align}\label{equ limit in proof of Lemma 5.2}
		\lim_{\varepsilon\rightarrow 0}\mathcal{Q}^\beta_{t_m}(\mu^\varepsilon)&=  \lim_{\varepsilon\rightarrow 0} \frac{1}{2\chi (\rho)} \left(\|\varphi^\varepsilon\|_{L^2(\R)}^2+ [G^{\varepsilon},G^{\varepsilon}]\right) \notag\\
		&=\frac{1}{2\chi (\rho)} \left(\|\varphi\|_{L^2(\R)}^2+ [G,G]\right)=\mathcal{Q}_{t_m}(\mu)
	\end{align}
	Since $\mu^\varepsilon$ satisfies \eqref{equ condition 5.1},  we have shown that
	\[
	\mathcal{Q}^\beta_{t_m} (\mu^\varepsilon)\geq \frac{1}{2} (\mathbf{r}^\varepsilon)^T A^{-1} \mathbf{r}^\varepsilon,
	\]
	where $\mathbf{r}^\varepsilon=(r^\varepsilon_1,\ldots, r^\varepsilon_m)^T$ with $r^\varepsilon_i=\int_0^{+\infty}\mu^\varepsilon(t_i,u)-\varphi^\varepsilon (u) du$ for all $1\leq i\leq m$.  Thus,
	\[\mathcal{Q}_{t_m}^\beta (\mu) \geq    \limsup_{\varepsilon\rightarrow 0} \frac{1}{2} (\mathbf{r}^\varepsilon)^T A^{-1} \mathbf{r}^\varepsilon.\]
By Lemma \ref{lemma 5.3},  $\lim_{\varepsilon \rightarrow 0} \mathbf{r}^\varepsilon = \mathbf{r}$, thus concluding the proof. 
\end{proof}

\subsection{MDP for the tagged particle}\label{subsec:tagged} As for the current, we only need to prove exponential tightness and finite dimensional MDP for the tagged particle. Since the orderings of particles are preserved by the dynamics, the tagged particle and the current are related as follows: for $x > 0$, 
\begin{align}\label{tagged current relation}
 \{X^n (t) > x\} = \Big\{J^n_{-1,0} (t) \geq \sum_{y=0}^{x} \eta_y (t) \Big\},  \quad \{X^n (t) < -x\} = \Big\{J^n_{-1,0} (t) <  \sum_{y=-x}^{-1} \eta_{y} (t) \Big\}.
\end{align}
Equivalently, 
\begin{align}
J^n_{-1,0} (t) &= \sum_{x=0}^{X^n(t)-1} \eta_x (t) = \sum_{x=0}^{X^n(t)-1} (\eta_x (t) -\rho) + \rho X^n(t) \quad \text{if}\; J^n_{-1,0} (t) \geq 0;\label{tagged current relation 1}\\
J^n_{-1,0} (t) &= -\sum_{x=X^n(t)}^{-1} \eta_x (t) = - \sum_{x=X^n(t)}^{-1} (\eta_x (t) -\rho) + \rho X^n(t) \quad \text{if}\; J^n_{-1,0} (t) < 0.\label{tagged current relation 2}
\end{align}
Following the proof of \cite[Lemma 5.3]{xue2024sample} and using \eqref{tagged current relation}, one can show that the tagged particle process is exponentially tight. Following the proof of \cite[Lemma 6.2]{xue2024sample} and using \eqref{tagged current relation 1}, \eqref{tagged current relation 2}, one can prove finite dimensional MDP for the tagged particle process. Since the proof is exactly the same, we do not repeat it here. Intuitively, the first terms on the right hand sides of \eqref{tagged current relation 1} and \eqref{tagged current relation 2}, divided by $a_n$, are superexponentially small.  Then, it is straightforward to see that $\mathcal{X}^\beta = \rho^2 \mathcal{J}^\beta$ since the rate functions have quadratic forms.

\appendix

\bibliographystyle{plain}
\bibliography{zhaoreference.bib}
\end{document}